\numberwithin{equation}{section}
\theoremstyle{plain}
\newtheorem{remark}{Remark}
\newtheorem{theorem}{Theorem}
\newtheorem{lemma}[theorem]{Lemma}
\newtheorem{corollary}[theorem]{Corollary}
\newtheorem{proposition}[theorem]{Proposition}
\theoremstyle{definition}
\newtheorem{definition}{Definition}
\def\ep{\varepsilon}
\def\x{\times}
\def\R{{\mathbb R}}
\def\N{{\mathbb N}}
\let\on=\operatorname
\newcommand{\ud}{\,\mathrm{d}}
\def\Inc=\operatorname{Inc}
\newcommand{\eqdef}{\ensuremath{\stackrel{\mbox{\upshape\tiny def.}}{=}}}
\begin{document}
\title{Metric completion of $\on{Diff}([0,1])$ with the $H^1$ right-invariant metric}
\author{S. Di Marino}
\address{INdAM}
\email{simone.dimarino@altamatematica.it}
\author{A. Natale}
\address{INRIA, Project team Mokaplan}
\email{andrea.natale@inria.fr}
\author{R. Tahraoui}
\author{F.-X. Vialard}
\address{Universit\'e Paris-Est Marne-la-Vallée, LIGM, UMR CNRS 8049}
\email{fxvialard@normalesup.org}
\date{\today}
\maketitle

\begin{abstract}
We consider the group of smooth increasing diffeomorphisms $\on{Diff}$ on the unit interval endowed with the right-invariant $H^1$ metric. We compute the metric completion of this space which appears to be the space of increasing maps of the unit interval with boundary conditions at $0$ and $1$. We compute the lower-semicontinuous envelope associated with the length minimizing geodesic variational problem. We discuss the Eulerian and Lagrangian formulation of this relaxation and we show that smooth solutions of the EPDiff equation are length minimizing for short times. \end{abstract}

\section{Introduction}
On the group of smooth diffeomorphisms $\on{Diff}([0,1])$ with boundary conditions $\varphi(0) = 0$ and $\varphi(1)=1$, we consider the metric induced by the $H^1$ right-invariant metric on this group. Between two smooth diffeomorphisms $\varphi_0,\varphi_1 \in \on{Diff}([0,1])$, the right-invariant  distance $\on{dist}$ is defined by
\begin{equation}\label{H1Norm}
\on{dist}(\varphi_0,\varphi_1)^2 = \inf_{v \in C^1([0,1]^2)} \int_0^1 \int_{M} v(t,x)^2 +\frac 14 (\partial_x v(t,x))^2 \ud x \ud t \,,
\end{equation}
where $v$ is a time dependent vector field on $[0,1]$ with $v(t,0) = v(t,1) = 0$ for all $t \in [0,1]$ and
under the flow equation constraint 
\begin{equation}\label{EqFlowEquation}
\partial_t \varphi(t,x) = v(t,\varphi(t,x))
\end{equation}
and time boundary conditions $\varphi(1) = \varphi_1$ and $\varphi(0) = \varphi_0$.
It has been proven in \cite{Michor2005} that this distance is not degenerate on the group of diffeomorphisms, contrary to the right-invariant $L^2$ metric on the group. The Euler-Lagrange equation is known as the Camassa-Holm equation \cite{Holm1993}.
\par
For this choice of parameters, the Camassa-Holm equation takes the form
\begin{equation}\label{EqCHEulerian}
\partial_t v - \frac 14\partial_{txx} v + 3\partial_x v v - \frac 12\partial_{xx}v \partial_x v - \frac 14\partial_{xxx} v v = 0\,.
\end{equation}
It is also possible to write this action in Lagrangian coordinates:
\begin{equation}\label{Lagrangian}
L(\varphi) = \inf \int_0^1 \int_{M} (\partial_t \varphi)^2 \partial_x \varphi + \frac{1}{4}\frac{(\partial_{tx}\varphi)^2}{\partial_x \varphi} \ud x\ud t 
\end{equation}
with the corresponding time boundary conditions. Note that the second term can be extended as a convex functional on time dependent measures.
We can write the Euler-Lagrange equation associated with the Lagrangian $L$
\begin{equation}
\begin{cases}
-\frac{\ud }{\ud t} \frac{\partial L}{\partial \dot{\varphi}} + \frac{\partial L}{\partial \varphi}= 0\\
\frac{\partial L}{\partial \dot{\varphi}} = 2 \dot{\varphi} \partial_x \varphi - \frac{1}{2} \partial_x \left( \frac{\partial_x \dot{\varphi}} {\partial_x \varphi}\right) = 2\dot{\varphi} \partial_x \varphi - \frac 12 \partial_{xt}\log(\partial_x \varphi)\\
\frac{\partial L}{\partial \varphi} = - \partial_x\left( \dot{\varphi}^2\right) + \frac{\partial_{x}}{4} (\partial_t\log(\partial_x \varphi))^2\,,
\end{cases}
\end{equation}
which gives the following equation
\begin{equation}
-2\partial_{tt}\varphi \partial_x \varphi - 2 \partial_t{\varphi} \partial_{xt}\varphi + \frac12 \partial_{xtt} \log(\partial_x \varphi)  - \partial_x\left( \dot{\varphi}^2\right) + \frac{\partial_{xt}}{4} \log(\partial_x \varphi) = 0\,.
\end{equation}
\par 
It is shown in \cite{GALLOUET2017} that smooth solutions of the Camassa-Holm equation on the circle $S_1$ \eqref{EqCHEulerian} are length minimizing for short times. Interestingly, local existence of smooth solutions to the Cauchy problem \eqref{EqCHEulerian} together with \eqref{EqFlowEquation} are guaranteed since there is no loss of regularity of the geodesic flow as proven in \cite{Constantin2003}. A simple adaptation of the proof suggests that this result should also hold on the unit interval taking into account the boundary conditions. After this short time, which can be quantitatively estimated in terms of the Hessian of the flow map, existence of minimizers is not known. To the best of our knowledge, state of the art results in proving existence of length minimizing curves on a group of diffeomorphisms with a right-invariant metric is contained in \cite{CompletenessDiffeomorphismGroup} where \emph{strong} (see \cite{Ebin1970} for more details on strong and weak metrics) Riemannian Sobolev metrics, above the $C^1$ critical index, on the group are studied. In \cite{CompletenessDiffeomorphismGroup}, due to the Sobolev embedding theorem, the standard method of calculus of variation has been applied and it has led to a Hopf-Rinow type of result on the group of diffemorphisms with strong Sobolev metrics. In the one dimensional case, it gives for instance that the usual Sobolev metric $H^{n}(S_1)$ for $n > 3/2$ is a complete Riemannian manifold such that between any two points there exists a length minimizing geodesic and the geodesic flow is globally well-posed.
In \cite{Andrea2018}, a relaxation à la Brenier of the length minimizing geodesics problem is studied but it can be proven not tight in dimension $1$ and this relaxation is possibly tight in greater dimensions, which is still an open question in \cite{Andrea2018}.
\par
The question we want to address hereafter is the computation of a tight relaxation of the functional \eqref{H1Norm} in the case of $M = [0,1]$ and the completion of the group of diffeomorphisms as a metric space. In comparison with \cite{CompletenessDiffeomorphismGroup}, the $H^1$ metric is a \emph{weak} Riemannian metric and the Sobolev embedding does not apply, nor a theorem such as Aubin-Lions-Simon's, which makes the variational study more subtle. The closest technical advances we could rely on is, to the best of our knowledge, the line of research opened by Di-Perna and Lions, such as \cite[Corollary 2.6]{CrippaEstimates} which shows estimates on the (integral) variation of the logarithm of the Lagrangian flow map to deduce compactness of the flow. Note that the space of vector fields that are in $L^2([0,1],H^1_0([0,1]))$ does not insure a well-defined flow, even in a weak sense using Di-Perna Lions or the more recent work of Crippa and Ambrosio. Indeed, the divergence of the vector field, in this one dimensional case, its first derivative, is not bounded in $L^1([0,1],L^\infty([0,1]))$. Actually, the fact that the compressibility of the flow is not bounded is an important feature of the solutions to the Camassa-Holm equation, it is well-known that there exist vector fields that describe a peakon-antipeakon (two particles $x_0 < x_1$
) collision, i.e. such that the Lagrangian flow map (see definition \eqref{ThLagrangianFlowMap}) is such that $\varphi(t,x_0) = \varphi(t,x_1)$ for a certain finite time $t>0$ and $x_0 < x_1$.
\par 
\emph{Strategy for the relaxed formulation and the metric completion: } We aim at finding the semi-continuous envelope of the functional in Lagrangian coordinates \eqref{Lagrangian} or in Eulerian coordinates \eqref{H1Norm}. Although the two formulations are equivalent in a smooth setting, they may differ on non-smooth maps. In order to show existence of minimizers using the formulation \eqref{H1Norm}, we prove that the flow constraint is stable with respect to weak convergence, based on Helly's selection theorem. 
\par
Our approach also enables to compute the metric completion of the group of diffeomorphisms endowed with the right-invariant $H^1$ metric. In fact, we show that the formulation \eqref{H1Norm} on $M = [0,1]$ can be extended to the space of non-decreasing functions $f$ of $[0,1]$ into $[0,1]$ such that $f(0) = 0$ and $f(1) = 1$, space that we denote by $\on{Mon}_+$. 
Our main result is the following theorem.
\begin{theorem}\label{ThMainTheorem}
For $\varphi_0,\varphi_1 \in \on{Mon}_+$, the functional $L$ on the space of time dependent vector fields $v \in L^2([0,1],H^1_0([0,1]))$, i.e. $v(t,0) = v(t,1) = 0$
\begin{equation}\label{EqNormDefinition}
\mathcal{L}(v) = \int_0^1 \int_{M} v(t,x)^2 +\frac 14 (\partial_x v(t,x))^2 \ud x \ud t\,,
\end{equation}
under the flow equation constraint \eqref{EqFlow} and time boundary conditions $\varphi(1) = \varphi_1$ and $\varphi(0) = \varphi_0$ admits minimizers. 
Moreover, denoting $d^2(\varphi_0,\varphi_1)$ the minimum value of $L$, $d$ defines a right-invariant distance on the space of non decreasing functions $\on{Mon}_+$ for which it is a complete metric space. 
\end{theorem}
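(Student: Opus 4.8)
\emph{Overall approach.} I would establish the three assertions in order --- existence of minimizers, then the distance axioms, then completeness --- the last two being essentially corollaries of the first together with a handful of quantitative estimates. The method is the direct method of the calculus of variations, applied to the \emph{pair} $(v,\varphi)$ rather than to $v$ alone: since a vector field in $L^2([0,1],H^1_0([0,1]))$ has unbounded spatial derivative, it need not determine a flow, so the flow map must be carried along and controlled directly.

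\emph{Step 1: existence of minimizers (the main difficulty).} Assume $d(\varphi_0,\varphi_1)<\infty$ (verified in Step 2) and take a minimizing sequence $(v_n,\varphi_n)$ satisfying the flow equation \eqref{EqFlowEquation} (in integrated form) with the prescribed time boundary conditions and $\mathcal{L}(v_n)\to d^2(\varphi_0,\varphi_1)$. As $\mathcal{L}$ is, up to the evident identification, the squared norm of the Hilbert space $H\eqdef L^2([0,1],H^1_0([0,1]))$, the $v_n$ are bounded in $H$; after extraction, $v_n\rightharpoonup v$ weakly in $H$, and weak lower semicontinuity of the norm gives $\mathcal{L}(v)\le\liminf_n\mathcal{L}(v_n)$. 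It remains to build an admissible flow for $v$, which is the delicate point. Here I would use two bounds uniform in $n$: each $\varphi_n(t,\cdot)$ is non-decreasing with values in $[0,1]$, hence of total variation $1$; and from $\varphi_n(t,x)-\varphi_n(s,x)=\int_s^t v_n(\tau,\varphi_n(\tau,x))\ud\tau$, the estimate $|v_n(\tau,y)|\le\|\partial_x v_n(\tau,\cdot)\|_{L^2}$ (using $v_n(\tau,0)=0$) and Cauchy--Schwarz in $\tau$ yield $|\varphi_n(t,x)-\varphi_n(s,x)|\le|t-s|^{1/2}\|\partial_x v_n\|_{L^2([0,1]^2)}$, so the $\varphi_n$ are equi-Hölder in time, uniformly in $x$. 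By Helly's selection theorem in the space variable (over a countable dense set of times), this time equicontinuity, and a diagonal argument, after further extraction $\varphi_n\to\varphi$ pointwise with $\varphi(t,\cdot)\in\on{Mon}_+$ for every $t$ and the time boundary conditions preserved. Finally I would pass to the limit in the integrated flow equation by splitting $v_n(s,\varphi_n(s,x))-v(s,\varphi(s,x))$ into $v_n(s,\varphi_n(s,x))-v_n(s,\varphi(s,x))$, whose time integral is bounded by $\|\partial_x v_n\|_{L^2([0,1]^2)}\big(\int_0^1|\varphi_n(s,x)-\varphi(s,x)|\ud s\big)^{1/2}\to0$, and $v_n(s,\varphi(s,x))-v(s,\varphi(s,x))$, whose time integral tends to $0$ because $w\mapsto\int_0^t w(s,\varphi(s,x))\ud s$ is a bounded linear functional on $H$ (as $\|w(s,\cdot)\|_\infty\le\|\partial_x w(s,\cdot)\|_{L^2}$). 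Thus $(v,\varphi)$ is admissible and $v$ minimizes $\mathcal{L}$. The crux, and the only genuinely nontrivial step, is this stability of the flow constraint under weak convergence of $v$: it cannot be argued at the level of $v$ (no uniqueness of the flow), only at the level of the flow maps via Helly.

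\emph{Step 2: $d$ is a right-invariant distance.} Symmetry is clear by reversing time (replacing $(v(t,x),\varphi(t,x))$ by $(-v(1-t,x),\varphi(1-t,x))$ keeps the energy and joins $\varphi_1$ to $\varphi_0$). Non-degeneracy is now a corollary of Step 1: if $d(\varphi_0,\varphi_1)=0$, a minimizer $v$ has $\mathcal{L}(v)=0$, so $v\equiv0$, so the associated flow is constant and $\varphi_1=\varphi_0$. For the triangle inequality I would use the usual reparametrization argument: passing to the reparametrization-invariant length $\int_0^1\big(\int_M v^2+\tfrac{1}{4}(\partial_x v)^2\ud x\big)^{1/2}\ud t$, whose infimum over admissible paths equals $d(\varphi_0,\varphi_1)$, reduces subadditivity to concatenation of paths. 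Right-invariance (under $\on{Diff}$, and more generally under invertible elements of $\on{Mon}_+$) follows because $(v,\varphi(t,\cdot)\circ\psi)$ joins $\varphi_0\circ\psi$ to $\varphi_1\circ\psi$ with the same value of $\mathcal{L}$. Finiteness of $d$ on $\on{Mon}_+\times\on{Mon}_+$ then reduces, via right-invariance and the triangle inequality, to joining $\on{id}$ to an arbitrary $\varphi\in\on{Mon}_+$ by a path of finite action; this I would do by an explicit construction, handling the flat parts of $\varphi$ (collapsing the corresponding subintervals, at a cost controlled by their total length) and a mollified version of $\varphi$ separately.

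\emph{Step 3: completeness.} From $\varphi_1(x)-\varphi_0(x)=\int_0^1 v(s,\varphi(s,x))\ud s$ and $|v(s,y)|\le\|\partial_x v(s,\cdot)\|_{L^2}$ applied to a minimizer, one obtains $\|\varphi_0-\varphi_1\|_\infty\le 2\,d(\varphi_0,\varphi_1)$. Hence any $d$-Cauchy sequence $(\varphi_n)$ is uniformly Cauchy, so converges uniformly to some $\varphi$, which belongs to $\on{Mon}_+$ as a uniform limit of non-decreasing maps fixing $0$ and $1$. To see $d(\varphi_n,\varphi)\to0$ I would fix $n$, choose $v_{n,m}$ joining $\varphi_n$ to $\varphi_m$ with $\mathcal{L}(v_{n,m})\le d(\varphi_n,\varphi_m)^2+1/m$, and let $m\to\infty$: the compactness argument of Step 1 (the targets $\varphi_m$ converging uniformly to $\varphi$) produces a weak subsequential limit $v_n$ of the $v_{n,m}$ joining $\varphi_n$ to $\varphi$, with $\mathcal{L}(v_n)\le\sup_{m\ge n}d(\varphi_n,\varphi_m)^2$. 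Therefore $d(\varphi_n,\varphi)^2\le\sup_{m\ge n}d(\varphi_n,\varphi_m)^2\to0$, proving that $(\on{Mon}_+,d)$ is complete.
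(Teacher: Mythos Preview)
Your proof is correct and follows the same core strategy as the paper: the direct method applied to the pair $(v,\varphi)$, with flow stability obtained from the equi-H\"older-in-time estimate combined with Helly's selection theorem in space. Two points of execution differ and are worth noting. First, to pass to the limit in the flow equation you split $v_n(s,\varphi_n)-v(s,\varphi)$ and use the $H^1$ H\"older estimate plus the weak-linear-functional argument; the paper instead writes $v_n(r,\varphi_n(r,x))=\int_0^1 \partial_x v_n(r,y)\,\chi_{\{y\le\varphi_n(r,x)\}}\,dy$ and pairs weak $L^2$ convergence of $\partial_x v_n$ with strong $L^2$ convergence of the indicator---both are clean, yours is arguably more direct. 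Second, for completeness you reuse the stability/compactness argument with moving endpoints $\varphi_m\to\varphi$, whereas the paper passes to a rapidly Cauchy subsequence and concatenates the (unit-speed reparametrized) optimal vector fields on shrinking time intervals; your route avoids the bookkeeping of infinite concatenation, the paper's route avoids a second diagonal extraction.

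The one place where your sketch is genuinely thinner than the paper is the finiteness of $d$ (non-emptiness of the admissible set). You propose to join $\on{id}$ to an arbitrary $\varphi\in\on{Mon}_+$ by ``handling the flat parts'' and ``a mollified version'', but an element of $\on{Mon}_+$ may also have jumps, and creating a jump from $\on{id}$ is the dual problem to collapsing an interval---your sketch does not say how this is done with finite $H^1$ energy. The paper sidesteps the issue with a neat trick (its Lemma~\ref{ThIdentityToStep}): rather than targeting $\on{id}$, it connects \emph{every} $\varphi\in\on{Mon}_+$ to the single step map $x\mapsto 1/2$ on $(0,1)$, using the non-Lipschitz autonomous field $x\mapsto \on{sgn}(1/2-x)|x-1/2|^{1/3}$ (and a similar field near $0,1$) to exploit the non-uniqueness of Lagrangian flows. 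This makes finiteness and transitivity immediate. You should either adopt that construction or spell out yours more carefully.
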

Last, we prove a gamma convergence result which shows that for given sufficiently regular $\varphi_0,\varphi_1 \in \on{Diff}$, minimizing on (corresponding) regular paths gives the same infimum value than in Theorem \ref{ThMainTheorem}.

\section{Flow stability}

First, we start with the definition of the flow corresponding to a vector field in $L^2([0,1],H^1_0)$, since it has, in general, not a unique Lagrangian solution.
\begin{definition}\label{ThLagrangianFlowMap}
Let $v \in L^1([0,1],C([0,1]))$. Then $\varphi$ is said to be a Lagrangian flow associated with $v$ if 
\begin{itemize}
\item $x \mapsto \varphi(t,x)$ is increasing for every $t$;
\item for every $x$, the map $t \mapsto \varphi(t,x)$ is absolutely continuous and
\begin{equation}\label{EqFlow} \varphi(t,x)-\varphi(s,x) = \int_s^t v(r,\varphi(r,x)) \, d r \qquad \forall \, 0\leq s <t\leq 1.\end{equation}
\end{itemize}
\end{definition}

Importantly, a Lagrangian flow need not be unique and we will often use this property in the rest of the paper, see for instance Lemma \ref{ThIdentityToStep}. The following result shows that every two maps are connected through a Lagrangian flow, which is an equivalence relation.

\begin{proposition}[Equivalence relation]\label{ThTransitivity}
Let $\varphi_0,\varphi_1 \in \on{Mon}_+$ be two increasing maps. The relation defined on $\on{Mon}_+$ by
\par
\noindent
"there exists a Lagrangian flow such that $\varphi(0,x) =  \varphi_0(x)$ and $\varphi(1,x) =  \varphi_1(x)$"
\par
\noindent
is symmetric and transitive. Moreover, there always exists a Lagrangian flow between two increasing maps.
\end{proposition}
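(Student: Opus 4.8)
The plan is to establish the three claims — symmetry, transitivity, and existence — in that order, with existence being the substantive point.

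\emph{Symmetry.} Suppose $\varphi$ is a Lagrangian flow with $\varphi(0,\cdot)=\varphi_0$ and $\varphi(1,\cdot)=\varphi_1$, driven by $v \in L^1([0,1],C([0,1]))$. I would simply time-reverse: set $\tilde\varphi(t,x) = \varphi(1-t,x)$ and $\tilde v(t,x) = -v(1-t,x)$. Then $\tilde v \in L^1([0,1],C([0,1]))$, each $x\mapsto\tilde\varphi(t,x)$ is increasing, each $t\mapsto\tilde\varphi(t,x)$ is absolutely continuous, and the change of variables $r\mapsto 1-r$ in the integral equation \eqref{EqFlow} shows $\tilde\varphi$ solves the flow equation for $\tilde v$ with $\tilde\varphi(0,\cdot)=\varphi_1$, $\tilde\varphi(1,\cdot)=\varphi_0$. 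This is a one-line verification.

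\emph{Transitivity.} Given a flow $\varphi$ from $\varphi_0$ to $\varphi_1$ on $[0,1]$ and a flow $\psi$ from $\varphi_1$ to $\varphi_2$ on $[0,1]$, I would concatenate after reparametrizing each to half the time interval: define $\chi(t,x)=\varphi(2t,x)$ for $t\in[0,1/2]$ and $\chi(t,x)=\psi(2t-1,x)$ for $t\in[1/2,1]$, with the correspondingly rescaled and time-shifted vector fields $2v(2t,\cdot)$ and $2w(2t-1,\cdot)$ glued together; the two pieces agree at $t=1/2$ since $\varphi(1,\cdot)=\varphi_1=\psi(0,\cdot)$. One checks the glued vector field is still in $L^1([0,1],C([0,1]))$, monotonicity in $x$ is preserved, and absolute continuity in $t$ holds on each half and hence on $[0,1]$ since the values match at the join; the integral equation \eqref{EqFlow} then holds by additivity of the integral across $t=1/2$. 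Again routine.

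\emph{Existence of a flow between any two increasing maps.} This is the heart of the statement, and the natural obstacle is that an arbitrary $\varphi_0,\varphi_1\in\on{Mon}_+$ may be highly irregular (flat pieces, jumps), so there is no a priori smooth interpolation. By the symmetry and transitivity just proved, it suffices to connect an arbitrary $\varphi_0\in\on{Mon}_+$ to the identity, i.e.\ to exhibit \emph{one} Lagrangian flow from $\on{id}$ to $\varphi_0$; then any two are linked through $\on{id}$. To build such a flow I would take the straight-line-in-$\on{Mon}_+$ homotopy $\varphi(t,x) = (1-t)\,x + t\,\varphi_0(x)$, which lies in $\on{Mon}_+$ for every $t$ (convex combination of non-decreasing maps fixing $0$ and $1$) and is affine, hence absolutely continuous, in $t$ for each fixed $x$, with $\partial_t\varphi(t,x) = \varphi_0(x) - x$. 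The task is then to produce a vector field $v\in L^1([0,1],C([0,1]))$ with $v(t,\varphi(t,x)) = \varphi_0(x)-x$ and $v(t,0)=v(t,1)=0$. For fixed $t<1$ the map $x\mapsto\varphi(t,x)$ is strictly increasing (it is $(1-t)x$ plus a non-decreasing term), hence a homeomorphism of $[0,1]$, so we may \emph{define} $v(t,y) = \varphi_0\big(\varphi(t,\cdot)^{-1}(y)\big) - \varphi(t,\cdot)^{-1}(y)$; this is continuous in $y$, vanishes at the endpoints, and by construction satisfies the flow identity. The remaining point — and the one deserving care — is the $L^1$-in-time bound $\int_0^1 \|v(t,\cdot)\|_{C([0,1])}\,\ud t < \infty$ near $t=1$, where the change of variables degenerates; since $\|v(t,\cdot)\|_{\infty} = \sup_x |\varphi_0(x)-x| \le 1$ is in fact uniformly bounded in $t$, this is immediate, and $v\in L^\infty([0,1],C([0,1]))\subset L^1([0,1],C([0,1]))$. (If one wants to avoid the endpoint issue altogether one can instead use Lemma~\ref{ThIdentityToStep}-type constructions, but the linear homotopy already suffices.) I expect the only genuinely delicate verification to be the joint measurability/continuity of $(t,y)\mapsto v(t,y)$ and the absolute continuity of $t\mapsto\varphi(t,x)$ being compatible with \eqref{EqFlow}, which follows from dominated convergence once the uniform bound on $\|v(t,\cdot)\|_\infty$ is in hand.
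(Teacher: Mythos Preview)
Your symmetry and transitivity arguments are exactly the paper's (time reversal and concatenation). For existence you take a genuinely different route: the paper connects every $\varphi_0\in\on{Mon}_+$ to the fixed step map $\mathbf{1}_{\{1\}}+\tfrac12\mathbf{1}_{(0,1)}$ via Lemma~\ref{ThIdentityToStep}, using non-Lipschitz autonomous fields of the form $|x-c|^{1/3}$ to collapse everything to a point in finite time, whereas you connect $\varphi_0$ to the identity by the linear homotopy $\varphi(t,x)=(1-t)x+t\varphi_0(x)$.

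There is, however, a real gap in your existence argument. You write that for $t<1$ the map $x\mapsto\varphi(t,x)$ is strictly increasing ``hence a homeomorphism of $[0,1]$''. Strict monotonicity does not give continuity: if $\varphi_0$ has a jump at $x_0$ then $\varphi(t,\cdot)$ has a jump of size $t\big(\varphi_0(x_0^+)-\varphi_0(x_0^-)\big)$ there for every $t>0$, so $\varphi(t,\cdot)$ is not surjective and your formula $v(t,y)=\varphi_0\big(\varphi(t,\cdot)^{-1}(y)\big)-\varphi(t,\cdot)^{-1}(y)$ leaves $v(t,\cdot)$ undefined on each gap interval. You have therefore not produced a continuous vector field on $[0,1]$, and the claim ``this is continuous in $y$'' is unjustified. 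The fix is straightforward but must be stated: on the gap $\big[(1-t)x_0+t\varphi_0(x_0^-),\,(1-t)x_0+t\varphi_0(x_0^+)\big]$ extend $v(t,\cdot)$ by the affine map $y\mapsto \big(y-(1-t)x_0\big)/t - x_0$, which matches the one-sided limits $\varphi_0(x_0^\pm)-x_0$ and keeps $\|v(t,\cdot)\|_\infty\le 1$; one then checks continuity across the (at most countably many) gaps. With this repair your approach works and is arguably more elementary than the paper's, which buys simplicity of the target map at the price of exploiting non-uniqueness of flows for H\"older fields.
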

\begin{proof}
The symmetry is obtained just by time reversion of the Lagrangian flow, i.e. considering $\varphi(1-t,x)$. The transitivity follows by concatenation of Lagrangian flows. Last, the existence result follows from the next lemma \ref{ThIdentityToStep} which shows that every increasing map is connected to a particular increasing map. Thus, the equivalence class is the full set $\on{Mon}_+$.
\end{proof}

\begin{lemma}\label{ThIdentityToStep}
Let $\varphi_0 \in \on{Mon}_+$ be an increasing map. There exists a vector field $v \in L^2([0,1],H^1_0([0,1]))$ such that its Lagrangian flow satisfies $\varphi(0,x) =  \varphi_0(x)$ and $\varphi(1,x) = 1/2$ if $0<x<1$, $\varphi(1,1) = 1$ and $\varphi(1,0) = 0$ otherwise.
\end{lemma}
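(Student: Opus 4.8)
The plan is to produce a single ``collapsing'' vector field $v$, independent of $\varphi_0$, and then to pick for each $x$ a suitable trajectory of the (non-unique!) associated ODE. I would take a profile $h\in H^1_0([0,1])$, smooth on $[0,1]\setminus\{0,1/2,1\}$, with $h>0$ on $(0,1/2)$, $h<0$ on $(1/2,1)$, $h(0)=h(1/2)=h(1)=0$, and comparable to $\on{dist}(y,\{0,1/2,1\})^{3/4}$ near each of its three zeros; for instance $h(y)=\on{sgn}(1/2-y)\,(y(1-y))^{3/4}|1-2y|^{3/4}$. The exponent must lie strictly between $1/2$ and $1$: being $>1/2$ forces $h'\in L^2$, hence $h\in H^1_0$; being $<1$ forces $\ud y/|h(y)|$ to be integrable near each zero, which I use below. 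Rescaling time by $\lambda(t)=t\,\Lambda^*$ with $\Lambda^*:=\int_0^{1/2}\ud y/h(y)+\int_{1/2}^1\ud y/|h(y)|<\infty$, the field $v(t,y):=\lambda'(t)h(y)$ vanishes at $0$ and $1$ and has $\mathcal L(v)=(\Lambda^*)^2\int_0^1\bigl(h^2+\tfrac14(h')^2\bigr)\ud y<\infty$, so $v\in L^2([0,1],H^1_0([0,1]))$.

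Next I would analyse the autonomous equation $\dot Y=h(Y)$. For $y_0\in(0,1)$ it has a unique solution (local Lipschitz continuity of $h$ off $1/2$), which moves monotonically to $1/2$, reaches it at a time $T(y_0)\le\Lambda^*$, and stays there. For $y_0=0$ there is, besides the stationary solution, a maximal solution $Y^+=G^{-1}$ with $G(z)=\int_0^z\ud y/h(y)$, which leaves $0$ instantly and reaches $1/2$ at time exactly $\int_0^{1/2}\ud y/h(y)\le\Lambda^*$; symmetrically there is a minimal solution $Y^-$ from $1$. I then define the flow by $\varphi(t,0)=0$, $\varphi(t,1)=1$, and, for $x\in(0,1)$,
\[
\varphi(t,x)=\begin{cases} Y^+(\lambda(t)) & \text{if }\varphi_0(x)=0,\\ Y^-(\lambda(t)) & \text{if }\varphi_0(x)=1,\\ Y(\lambda(t);\varphi_0(x)) & \text{if }\varphi_0(x)\in(0,1).\end{cases}
\]
I would then check: (i) $x\mapsto\varphi(t,x)$ is nondecreasing, because $\varphi_0$ is, trajectories in $(0,1)\setminus\{1/2\}$ cannot cross, $Y^+$ lies weakly below every trajectory issued from $(0,1/2)$ and below $1/2$, and $Y^-$ lies weakly above every trajectory from $(1/2,1)$ and above $1/2$; (ii) $t\mapsto\varphi(t,x)$ is Lipschitz, hence absolutely continuous, and $\partial_t\varphi=\lambda'(t)h(\varphi)=v(t,\varphi)$, so $\varphi$ is a Lagrangian flow of $v$; (iii) $\varphi(0,\cdot)=\varphi_0$ since $\lambda(0)=0$; (iv) since $\lambda(1)=\Lambda^*$ dominates the time required in every case, $\varphi(1,\cdot)$ is exactly the prescribed step.

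The delicate point — and what the statement is really about — is the case where $\varphi_0$ has a flat piece touching an endpoint ($\varphi_0\equiv0$ on $[0,a]$ or $\varphi_0\equiv1$ on $[b,1]$): those points sit at a zero of $v$ yet must still be transported to $1/2$. This is possible only because the Lagrangian flow of $v$ is not unique, and it is exactly what dictates the window $(1/2,1)$ for the Hölder exponent of $h$ at its zeros: too small and $v$ leaves $L^2([0,1],H^1_0)$; too large (e.g. the Lipschitz case $h(y)\asymp|1/2-y|$ near $1/2$, or $h(y)\asymp y$ near $0$) and the collapse onto $1/2$, or the release from $0$, only happens asymptotically rather than by time $1$. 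The remaining work is bookkeeping — verifying that the maximal, minimal and stationary trajectories glue into one genuinely nondecreasing family $\varphi(t,\cdot)$ and that the flow identity is preserved. A variant that may be cleaner to write splits $[0,1]$ into two sub-intervals and concatenates an ``unsticking'' flow (which fans the stuck endpoint points out into the open interval, again using non-uniqueness at $0$ and $1$) with the pure collapse $\varphi(t,x)=Y(\lambda(t);\varphi_0(x))$ above.
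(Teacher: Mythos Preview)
Your argument is correct, and in one respect it is sharper than the paper's. Both proofs rest on the same mechanism: pick an autonomous drift that is merely H\"older at its zeros, so that the associated ODE is non-unique there, and then \emph{select} different branches (stationary at $x=0,1$, non-stationary for interior $x$ with $\varphi_0(x)\in\{0,1\}$) to assemble a monotone Lagrangian flow. The paper, however, builds the flow by \emph{concatenation}: a first field behaving like $-|y-1|^{1/3}$ near $1$ (and its mirror near $0$) to detach all interior points into some $[a,1-a]$, followed by a separate field $\on{sgn}(1/2-y)|y-1/2|^{1/3}$, implicitly cut off at the boundary, to collapse $[a,1-a]$ onto $1/2$. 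You instead design a single profile $h$ with simultaneous H\"older zeros at $0,1/2,1$ and pick the maximal/minimal/stationary branches pointwise, so one autonomous flow does everything; your closing ``variant'' is exactly the paper's decomposition. The one-shot version is tidier and makes the branch-selection and monotonicity checks explicit; the concatenated version isolates the two phenomena (unsticking from $\{0,1\}$ versus collapsing to $1/2$) and is perhaps easier to narrate.

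Where your write-up genuinely improves on the paper is the exponent analysis. You note that the H\"older exponent $\alpha$ at each zero must satisfy $\alpha\in(1/2,1)$: $\alpha>1/2$ so that $(h')^2\sim|y-c|^{2\alpha-2}$ is integrable and $h\in H^1_0$, and $\alpha<1$ so that $1/|h|$ is integrable and the collapse happens in finite time. The paper's choice $\alpha=1/3$ gives $(h')^2\sim|y-c|^{-4/3}\notin L^1$, so the displayed vector field is not actually in $H^1_0$; the idea is right but, as written, the construction needs exactly the repair you supply.
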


\begin{proof}
First, observe that the solutions of the real valued ODE $\dot{x} = x^{1/3}$ for $x(0)>0$ has a unique solution that can be written $x(t) = (x(0)^{2/3} + t)^{3/2}> t^{3/2}$.
The main point of the proof consists in using this vector field which is not Lipschitz, in order to use the nonuniqueness of solutions of the flow. Consider the autonomous vector field on $[0,1]$ defined on a neighborhood of $1$ by $x \mapsto -|x- 1|^{1/3}$ and extended on the rest of the interval by a smooth vector field vanishing at $x = 0$. Then, there is a unique solution to the flow equation \eqref{EqFlow} on $[0,1[$. At $x = 1$, we consider the path $\varphi(t,1) = 1$ which is solution to the Lagrangian flow equation. Now, remark that for every $x$ in the neighborhood of $1$, one has $\varphi(t,x) \leq 1-t^{3/2}$ at least for short times. It implies that for any $\varepsilon >0$ sufficiently small, $\lim_{x \to 1} \varphi(\varepsilon,x) \leq 1-a <1$ where $a>0$ is sufficiently small. Thus for $t >0$ sufficiently small, the set $\varphi(t,[0,1[)$ is strictly separated from $\varphi(t,1) = 1$. Using a similar vector field at $0$, we get that $\lim_{x \to 1} \varphi(\varepsilon,x) \geq a > 0$.
\par
Using the autonomous vector field on $[0,1]$ defined by $x \mapsto \on{sgn}(1/2-x)|x- 1/2|^{1/3}$. The associated Lagrangian flow we consider moves every point $x \in [a,1-a]$ and $x \neq 1/2$ goes to $1/2$ in finite time and stay fixed at $1/2$ after that time. The point $1/2$ is left fixed.
By composition of Lagrangian maps, we obtain the result.
\end{proof}

\par
We have the following stability result.

\begin{proposition} \label{EqFlowStability}
Let $\varphi_n$ be a Lagrangian flow associated with the vector field $v_n \in L^2([0,T]; H^1_0([0,1]))$, such that $\varphi_n(t,0)=0$ and $\varphi_n(t,1)=1$. Suppose that $v_n \rightharpoonup v$, there then exists a subsequence $\varphi_n \to \varphi$ converging pointwise and such that $\varphi$ is a Lagrangian flow for the vector field $v$. 
\end{proposition}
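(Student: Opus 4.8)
The proof will proceed in two stages: first I would extract a pointwise limit $\varphi$ of a subsequence of $(\varphi_n)$ --- not via any Aubin--Lions-type argument (which is unavailable here, since $\partial_x v_n$ need not be bounded in $L^1_t L^\infty_x$, as emphasized in the introduction) but from monotonicity in the space variable together with equicontinuity in time --- and then pass to the limit in the integral form \eqref{EqFlow} of the flow equation. Since $v_n \rightharpoonup v$ weakly in $L^2([0,T];H^1_0([0,1]))$, the sequence is bounded there, say $\|v_n\|_{L^2(H^1_0)} \le M$. The one-dimensional embedding $H^1_0([0,1]) \hookrightarrow C^{0,1/2}([0,1])$, with $\|f\|_{L^\infty} \le \|\partial_x f\|_{L^2}$ and $|f(a)-f(b)| \le \|\partial_x f\|_{L^2}\,|a-b|^{1/2}$, inserted into \eqref{EqFlow}, yields $|\varphi_n(t,x)-\varphi_n(s,x)| \le M\,|t-s|^{1/2}$ for all $x$, uniformly in $n$; thus the functions $\varphi_n(t,\cdot)$ are uniformly bounded, non-decreasing in $x$, and uniformly $1/2$-Hölder in $t$. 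Then I would apply Helly's selection theorem at each rational time and diagonalize to obtain a subsequence with $\varphi_n(q,x) \to \varphi(q,x)$ for every rational $q$ and every $x$, and use the uniform time-modulus to pass from rational times to arbitrary $t$, obtaining a pointwise limit $\varphi$ on all of $[0,T]\times[0,1]$ which is again non-decreasing in $x$ with $\varphi(t,0)=0$, $\varphi(t,1)=1$, and continuous (indeed $1/2$-Hölder) in $t$.

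\textbf{Passage to the limit in the flow equation.} Fix $x$ and $0 \le s < t \le T$. The left-hand side of \eqref{EqFlow} for $\varphi_n$ tends to $\varphi(t,x)-\varphi(s,x)$ by construction, so everything reduces to proving $\int_s^t v_n(r,\varphi_n(r,x))\ud r \to \int_s^t v(r,\varphi(r,x))\ud r$. I would split the integrand as $\big[v_n(r,\varphi_n(r,x))-v_n(r,\varphi(r,x))\big] + \big[v_n(r,\varphi(r,x))-v(r,\varphi(r,x))\big]$. The first bracket is dominated by $\|\partial_x v_n(r,\cdot)\|_{L^2}\,|\varphi_n(r,x)-\varphi(r,x)|^{1/2}$, whose integral over $[s,t]$ is, by Cauchy--Schwarz, at most $M\big(\int_s^t |\varphi_n(r,x)-\varphi(r,x)|\ud r\big)^{1/2}$, and this tends to $0$ by dominated convergence (the integrand is $\le 1$ and tends to $0$ pointwise in $r$). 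For the second bracket, the point is that, with $x$ and $[s,t]$ now frozen, the map $w \mapsto \int_s^t w(r,\varphi(r,x))\ud r$ is a \emph{bounded linear} functional on $L^2([0,T];H^1_0([0,1]))$, of norm $\le (t-s)^{1/2}$ by the same embedding and well defined because $w$ is measurable with values in $C([0,1])$ and $r \mapsto \varphi(r,x)$ is continuous; hence $v_n \rightharpoonup v$ gives directly $\int_s^t v_n(r,\varphi(r,x))\ud r \to \int_s^t v(r,\varphi(r,x))\ud r$. Combining the two brackets yields $\varphi(t,x)-\varphi(s,x) = \int_s^t v(r,\varphi(r,x))\ud r$ for all $0 \le s < t \le T$ and all $x$. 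Since $r \mapsto v(r,\varphi(r,x))$ lies in $L^1([0,T])$ (being dominated by $\|v(r,\cdot)\|_{H^1_0} \in L^2 \subset L^1$), the map $t \mapsto \varphi(t,x)$ is absolutely continuous, and combined with the monotonicity in $x$ this is exactly Definition \ref{ThLagrangianFlowMap}: $\varphi$ is a Lagrangian flow for $v$.

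\textbf{Expected main obstacle.} The step I expect to be the main obstacle is the passage to the limit in the nonlinear composition $v_n(r,\varphi_n(r,x))$: weak convergence of $(v_n)$ alone is far too weak to pass to the limit inside the composition, and the merely pointwise convergence of $(\varphi_n)$ does not help on its own either. The resolution is the decoupling described above --- the uniform spatial embedding $H^1_0 \hookrightarrow C^{0,1/2}$ absorbs the discrepancy between $\varphi_n$ and $\varphi$ into an error that is small in $L^1_r$ by dominated convergence, while what remains depends on $v_n$ \emph{linearly} through a single fixed bounded functional, to which the weak convergence applies directly. By contrast, the extraction of the limit (Helly's theorem plus time-equicontinuity) and the verification that the limit meets Definition \ref{ThLagrangianFlowMap} should be comparatively routine.
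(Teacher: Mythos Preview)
Your proof is correct, and the first stage---extracting the pointwise limit via Helly's selection theorem on a countable dense set of times, combined with the uniform $1/2$-H\"older modulus in time coming from $H^1_0 \hookrightarrow L^\infty$---is essentially identical to the paper's argument.

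Your passage to the limit in the flow equation, however, takes a different route. You split $v_n(r,\varphi_n(r,x)) - v(r,\varphi(r,x))$ into two brackets: the first is controlled by the $C^{0,1/2}$ embedding of $H^1_0$ together with dominated convergence, and the second by recognising $w \mapsto \int_s^t w(r,\varphi(r,x))\,\ud r$ as a bounded linear functional on $L^2_t H^1_0$, to which weak convergence applies directly. The paper instead rewrites
\[
v_n(r,\varphi_n(r,x)) = \int_0^1 \partial_x v_n(r,y)\,\chi_{\{y \le \varphi_n(r,x)\}}\,\ud y,
\]
observes that the indicator functions converge \emph{strongly} in $L^2$ (since $\|\chi_{\varphi_n,x} - \chi_{\varphi,x}\|_{L^2}^2 = \int_s^t |\varphi_n(r,x)-\varphi(r,x)|\,\ud r \to 0$), and pairs this with the weak $L^2$ convergence of $\partial_x v_n$. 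The two arguments are close in spirit---both ultimately rest on the same integral $\int_s^t |\varphi_n(r,x)-\varphi(r,x)|\,\ud r \to 0$---but your decomposition is slightly more portable (it does not exploit the one-dimensional integral representation $v(\cdot) = \int_0^{\cdot} \partial_x v$ and would adapt to other embeddings into H\"older spaces), while the paper's indicator trick makes the weak--strong product structure completely explicit in one line.
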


\begin{proof} First we observe that $v_n(t,1)=v_n(t,0)=0$; in particular we have 
$$\sup_{x \in [0,1]}  |v_n|(t,x) \leq \int_0^1 |\partial_x v_n|(t,x) \, dx \leq \left( \int_0^1 | \partial_x v_n|^2(t,x) \, dx \right)^{\frac 12}.$$
Then, we obtain the estimate for $s\leq t$,
$$ |\varphi_n(t,x) - \varphi_n(s,x)| \leq \int_s^t  \sup_{y \in [0,1]} |v_n|(t,y) \, d t  \leq \sqrt{t-s}  \left( \int_s^t \int_0^1 | \partial_x v_n|^2(r,x) \, dx \, dr \right)^{\frac 12}.$$

Since $\partial_x v_n \rightharpoonup \partial_x v$ in $L^2([0,1]^2)$ we have that $\| \partial_x v_n \|_{L^2([0,1]^2)}$ is equibounded; in particular we have that $\varphi_n$ are equi-H\"{o}lder in the spatial variable:
\begin{equation}\label{EqHolder} |\varphi_n(t,x) - \varphi_n(s,x)| \leq C \sqrt{t-s} \qquad \forall \, 0 \leq s < t \leq 1 \text{ and } \forall x \in [0,1].
\end{equation}
\par
We can now use the Helly selection theorem on a countable dense set $\{ t_i \} \subseteq [0,1]$, in order to get $\varphi_n(t_i,x) \to f_i(x)$ for every $x$ and every $i$. Using then \eqref{EqHolder} we obtain that there exists a unique $\varphi(t,x)$, which is again H\"{o}lder-continuous in the time variable (and uniformly in the space variable), such that $\varphi(t_i,x)=f_i(x)$ and moreover $\varphi_n(t,x) \to \varphi(t,x)$ for \emph{every} $(t,x) \in [0,1]^2$. Moreover, fixing $x \in [0,1]$, we also have that $t \mapsto \varphi_n(t,x)$ converges uniformly to $t \mapsto \varphi(t,x)$.
\par
Then, we use an equivalent definition for \eqref{EqFlow}:
 $$\varphi_n(t,x)-\varphi_n(s,x) = \int_s^t \int_0^{\varphi_n(r,x)} \partial_x v_n(r,y)  \, dy \, d r =  \int_s^t \int_0^{1} \partial_x v_n(r,y) \chi_{\varphi_n, x} \, dy \, d r,  $$
 where $\chi_{\psi,x} (r,y)= 1$ if $y \leq \psi(r,x)$ and $0$ otherwise. From  $\varphi_n(\cdot,x) \to \varphi(\cdot,x)$ uniformly and the boundedness of $\chi_{\varphi_n, x}$, we deduce $\chi_{\varphi_n, x} \to \chi_{\varphi, x}$ strongly in $L^2$. Using then the weak convergence in $L^2$ of $\partial_x v_n$ to $\partial_x v$, we can pass to the limit, obtaining:
  $$\varphi(t,x)-\varphi(s,x) =  \int_s^t \int_0^{1} \partial_x v(r,y) \chi_{\varphi, x} \, dy \, d r =\int_s^t v(r,\varphi(r,x)) \, d r,  $$
thus concluding the proof.
 \end{proof}

In order to prove the gamma convergence result, we need some results on the structure of the Lagrangian flow. We first prove that the discontinuities are fixed w.r.t. the time.

\begin{lemma}\label{ThCountableDiscontinuities}
Let $\varphi$ be a Lagrangian flow for $v \in L^2([0,1],H^1_0([0,1]))$, then there exists a countable set $(x_i)_{i \in I} \subset [0,1]$ which contains the discontinuity set (or jump set) of $x \mapsto \varphi(t,x)$ for all time $t \in [0,1]$.
\par
In other words, the Lagrangian flow can be decomposed in a pure jump part and a continuous part,
$\varphi(t,x) = \varphi_c(t,x) + \sum_i \delta_i(t) \mathbf{1}_{x\geq x_i}$ where $\varphi_c \in C^0(D)$, with $\delta_i(t)$ nonnegative functions.
\end{lemma}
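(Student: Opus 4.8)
The plan is to control, for each point $x_0\in[0,1]$, the size $g_{x_0}(t):=\varphi(t,x_0^+)-\varphi(t,x_0^-)\ge 0$ of the jump of the increasing map $\varphi(t,\cdot)$ at $x_0$ (these one-sided limits exist by monotonicity), to show that jumps cannot be created or destroyed arbitrarily fast, and then to run a counting argument against the bound ``the total jump of $\varphi(t,\cdot)$ is at most $1$''; the decomposition will then follow from the standard structure of monotone functions. The first step is a differential inequality. Fix $x_0$ and set $a(t)=\varphi(t,x_0^-)$, $b(t)=\varphi(t,x_0^+)$. Since $H^1_0([0,1])\hookrightarrow C([0,1])$ we have $\|v(r,\cdot)\|_\infty\le\|\partial_x v(r,\cdot)\|_{L^2}=:C(r)$ with $C\in L^2([0,1])\subset L^1([0,1])$; letting $y\to x_0^\pm$ in \eqref{EqFlow} and using dominated convergence (with $v(r,\cdot)$ continuous and bounded by $C(r)$) shows that $a$ and $b$ are absolutely continuous and solve $\dot a=v(r,a)$, $\dot b=v(r,b)$ a.e., so that
\begin{equation*}
\dot g_{x_0}(r)=v(r,b(r))-v(r,a(r))=\int_{a(r)}^{b(r)}\partial_x v(r,z)\,dz,\qquad |\dot g_{x_0}(r)|\le C(r)\sqrt{g_{x_0}(r)}\quad\text{a.e.}
\end{equation*}

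Next, persistence of jumps: on the open set $\{g_{x_0}>0\}$ the function $\sqrt{g_{x_0}}$ is locally absolutely continuous with $\tfrac{d}{dr}\sqrt{g_{x_0}}\ge-\tfrac12 C(r)$, and a standard comparison argument (a zero of $g_{x_0}$ cannot be reached while $\sqrt{g_{x_0}(t_0)}-\tfrac12\int_{t_0}^r C>0$) gives: if $g_{x_0}(t_0)=m>0$ then $g_{x_0}(t)\ge(\sqrt m-\tfrac12\int_{t_0}^t C(r)\,dr)_+^2$ for all $t$. Since $\int_{t_0}^t C\le C_*\sqrt{|t-t_0|}$ with $C_*:=\|\partial_x v\|_{L^2([0,1]^2)}$, this means a jump of size $m$ stays of size $\ge m/4$ on a time interval around $t_0$ of length $\ge\min(m/C_*^2,1)>0$ (the case $C_*=0$, where the flow is stationary in $t$, is trivial). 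Now set $m_{x_0}:=\sup_t g_{x_0}(t)\le 1$ (the total jump of the increasing map $\varphi(t,\cdot):[0,1]\to[0,1]$ is $\le 1$) and $X^*:=\{x_0 : m_{x_0}>0\}$; every discontinuity point of every $\varphi(t,\cdot)$ lies in $X^*$, so it suffices to prove $X^*$ countable. Stratify $X^*=\bigcup_{k\ge 1}X^*_k$, $X^*_k:=\{x_0 : 2^{-k}<m_{x_0}\le 2^{-k+1}\}$. For $x_0\in X^*_k$ pick $t_0$ with $g_{x_0}(t_0)>2^{-k}$; then $g_{x_0}\ge 2^{-k-2}$ on an interval $I_{x_0}\subseteq[0,1]$ with $|I_{x_0}|\ge\ell_k:=\min(2^{-k}/C_*^2,1)>0$. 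If $X^*_k$ contained $N$ distinct points, then $\sum_j|I_{x_0^{(j)}}|\ge N\ell_k$ forces some time $t^*$ to lie in at least $N\ell_k$ of these intervals, hence $N\ell_k\,2^{-k-2}\le\sum_x g_x(t^*)\le 1$; thus $\#X^*_k<\infty$ and $X^*$ is countable.

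For the decomposition, enumerate $X^*=(x_i)_{i\in I}$ and put $\delta_i(t):=g_{x_i}(t)\ge 0$. For each $t$, $S(t,x):=\sum_i\delta_i(t)\mathbf{1}_{x\ge x_i}$ is nondecreasing in $x$ with total mass $\le 1$ and its jump set and jump sizes coincide with those of $\varphi(t,\cdot)$, so $\varphi_c(t,x):=\varphi(t,x)-S(t,x)$ is nondecreasing with agreeing one-sided limits at every point, i.e. continuous in $x$ (after taking, at the countably many $x_i$, the right-continuous choice $\varphi(t,x_i):=\varphi(t,x_i^+)$, which does not affect the Lagrangian flow property). Joint continuity of $\varphi_c$ on $[0,1]^2$ follows: integrating the identity for $\dot\delta_i$ above, summing, and using that the image gaps $(a_i(r),b_i(r))$ are pairwise disjoint, one gets $S(t,x)-S(s,x)=\int_s^t\int_{G_x(r)}\partial_x v(r,z)\,dz\,dr$ with $G_x(r):=\bigcup_{x_i\le x}(a_i(r),b_i(r))$ of measure $\le 1$, whence $|S(t,x)-S(s,x)|\le C_*\sqrt{|t-s|}$; therefore $\varphi_c(\cdot,x)=\varphi(\cdot,x)-S(\cdot,x)$ is continuous in $t$, uniformly in $x$, and together with the continuity and monotonicity of $\varphi_c(t,\cdot)$ this forces $\varphi_c$ to be jointly continuous (separate continuity plus monotonicity in one variable implies joint continuity).

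The heart of the argument, and the main obstacle, is the persistence estimate combined with the counting. The inequality $|\dot g_{x_0}|\le C(r)\sqrt{g_{x_0}}$ sits exactly at the threshold where ODE uniqueness fails — which cannot be avoided, since peakon--antipeakon collisions show that jumps genuinely are created and destroyed in finite time — and yet it still forces a jump of size $m$ to consume an amount of order $m^2/C_*^2$ of the ``budget'' $\int_0^1(\text{total jump of }\varphi(t,\cdot))\,dt\le 1$, and this quadratic loss is precisely what makes the stratified count converge. The only other delicate point is extracting continuity in time of the total jump from the $L^2$ (rather than $L^\infty$) control of $\partial_x v$, which is handled through the disjointness of the image gaps.
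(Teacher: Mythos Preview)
Your argument is correct, but it is considerably more elaborate than the paper's. The paper observes only this: the uniform H\"older estimate $|\varphi(t',y)-\varphi(t,y)|\le C\sqrt{|t'-t|}$ (already obtained in Proposition~\ref{EqFlowStability}) passes to the one-sided limits $y\to x_0^\pm$, so the jump function $g_{x_0}$ is itself H\"older in time; hence any discontinuity of $\varphi(t,\cdot)$ at $x_0$ persists on an open time-neighbourhood of $t$ and therefore occurs at some rational time. Thus $\bigcup_{t\in[0,1]}\on{Disc}(\varphi(t,\cdot))\subset\bigcup_{t\in\mathbb{Q}}\on{Disc}(\varphi(t,\cdot))$, a countable union of countable sets. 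That is the whole proof.

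You recover the same persistence phenomenon quantitatively, via the differential inequality $|\dot g_{x_0}|\le C(r)\sqrt{g_{x_0}}$, and then run a stratified pigeonhole count instead of the rational-time trick. This buys you genuine extra information (a bound on $\#X^*_k$ in terms of $k$ and $\|\partial_x v\|_{L^2}$, and an explicit lifespan for a jump of given size), but at the cost of a page of estimates where two lines suffice. Your treatment of the decomposition, in particular the uniform-in-$x$ H\"older continuity of the total jump $S(t,x)$ via the disjointness of the image gaps and hence the joint continuity of $\varphi_c$, is a welcome addition: the paper states this ``in other words'' part but does not spell it out. One small remark: your appeal to ``separate continuity plus monotonicity'' is superfluous, since you have already established continuity in $t$ \emph{uniformly} in $x$, which together with continuity in $x$ for each $t$ gives joint continuity directly.
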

\begin{proof}
Denote by $\on{Disc}(\psi)$ the set of discontinuity points of $\psi$ a nondecreasing map on $[0,1]$; it is at most countable.
\\
Since the flow is uniformly H\"older in time, for any $(t,y) \in [0,1]^2$ such that $\varphi(t,x_+) \neq \varphi(t,x_-)$, there exists an open neighborhood $O(t)$ of $t$ on which $y$ is a discontinuity point for $\varphi(t',\cdot)$ for every $t'\in O(t)$, and in particular, at a rational time. Then, the previous remark shows that $\cup_{t \in [0,1]} \on{Disc}(\varphi(t,\cdot)) \subset \cup_{t \in \mathbb{Q}} \on{Disc}(\varphi(t,\cdot))$ and the right-hand side is at most countable, which gives the result.
\end{proof}
We now show that every Lagrangian flow of a time dependent $H^1_0$-vector field can be approximated in $L^1(D)$ by a continuous flow associated with the same vector field.
\begin{proposition}[General Filling]\label{ThGeneralFilling}
Let $\varphi$ be a Lagrangian flow associated with $v \in L^2([0,1],H^1_0([0,1]))$, then for every $\varepsilon >0$, there exists $\varphi_\varepsilon$ Lagrangian flow still associated with $v \in L^2([0,1],H^1_0([0,1]))$ such that $\| \varphi_\varepsilon - \varphi \|_{L^1} \leq \varepsilon$ and $\varphi_\varepsilon$ is continuous on $D$.
\end{proposition}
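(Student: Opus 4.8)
\emph{Plan of proof.} The plan is to build $\varphi_\varepsilon$ by a spatial reparametrisation of $\varphi$ that opens a thin slot in the domain near each jump point and fills the corresponding gap in the image by a continuum of trajectories of $v$. By Lemma~\ref{ThCountableDiscontinuities} the jump set of $\varphi(t,\cdot)$ is contained in a fixed countable set $(x_i)_{i\in I}$, and for each $t$ the intervals $G_i(t)\eqdef(\varphi(t,x_i^-),\varphi(t,x_i^+))$ are pairwise disjoint in $[0,1]$. We fix $\ell_i>0$ with $L\eqdef\sum_i \ell_i<\varepsilon$ and let $\pi\colon[0,1]\to[0,1]$ be a continuous nondecreasing surjection that is constant equal to $x_i$ on a closed interval $J_i$ of length $\ell_i$ and strictly increasing off $\bigcup_i J_i$, so that $0\le x-\pi(x)\le L$. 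On $[0,1]\setminus\bigcup_i J_i$ we set $\varphi_\varepsilon(t,x)\eqdef\varphi(t,\pi(x))$; for each such $x$ this is a trajectory of $v$, being one of the trajectories of $\varphi$. Note that one \emph{cannot} instead leave $\varphi$ unchanged off $\bigcup_i J_i$ and merely overwrite it on $J_i$: erasing the trajectories originally carried by $J_i$ would only relocate the jump to the right endpoint of $J_i$, not remove it, so genuinely compressing the complement is essential. It remains to define $\varphi_\varepsilon$ on each $J_i$ by interpolating between the two edge trajectories of $G_i$.

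First we record that the edge curves $a_i(t)\eqdef\varphi(t,x_i^-)$ and $b_i(t)\eqdef\varphi(t,x_i^+)$ are themselves trajectories of $v$. Every trajectory $y$ satisfies the uniform estimate $|y(t)-y(s)|\le|\int_s^t\|v(r,\cdot)\|_\infty\,\ud r|\le\sqrt{|t-s|}\,\|\partial_x v\|_{L^2([0,1]^2)}$ exactly as in the proof of Proposition~\ref{EqFlowStability}; hence $\varphi(\cdot,x)\to a_i(\cdot)$ uniformly as $x\uparrow x_i$, and passing to the limit in \eqref{EqFlow} — using that $v(r,\cdot)$ is continuous and $|v(r,\varphi(r,x))|\le\|\partial_x v(r,\cdot)\|_{L^2}\in L^1([0,1])$ — yields $a_i(t)-a_i(s)=\int_s^t v(r,a_i(r))\,\ud r$, and similarly for $b_i$. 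So $a_i\le b_i$ are trajectories with $G_i(t)=(a_i(t),b_i(t))$.

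The heart of the matter is the following claim: \emph{given two trajectories $a\le b$ of $v$ on $[0,1]$ there is a family $(y^s)_{s\in[0,1]}$ of trajectories of $v$, continuous as a map $[0,1]\to C([0,1])$, with $y^0=a$, $y^1=b$, $a(t)\le y^s(t)\le b(t)$ for all $t$, and $s\mapsto y^s(t)$ nondecreasing — hence onto $[a(t),b(t)]$ — for all $t$.} Granting this, we set $\varphi_\varepsilon(t,x)\eqdef y_i^{s}(t)$ for $x\in J_i$ with $s$ the affine coordinate of $x$ on $J_i$; joint continuity on $J_i\times[0,1]$ follows from the uniform time modulus, the endpoint values on $J_i$ are $a_i(t)$ and $b_i(t)$, which match $\varphi(t,\pi(\cdot))$ there by the choice of $\pi$ and the previous step, and monotonicity of $\varphi_\varepsilon(t,\cdot)$ across those endpoints holds because $\pi<x_i$ just left of $J_i$ and $\pi>x_i$ just right of it. Thus $\varphi_\varepsilon$ is a continuous Lagrangian flow of $v$. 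The $L^1$ bound is then routine: crudely $\sum_i\int_{J_i}|\varphi_\varepsilon-\varphi|\le\sum_i\ell_i=L$, while on the complement $\int_0^1|\varphi(t,\pi(x))-\varphi(t,x)|\,\ud x\le\int_0^1\bigl(\varphi(t,x)-\varphi(t,(x-L)^+)\bigr)\ud x\le L$ since $\varphi(t,\cdot)$ is nondecreasing with values in $[0,1]$; hence $\|\varphi_\varepsilon-\varphi\|_{L^1(D)}\le 2L$, and it suffices to take $L<\varepsilon/2$ (or to rescale $\varepsilon$).

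Finally, the proof of the claim is where the non-uniqueness of the Lagrangian flow enters decisively, in the spirit of Lemma~\ref{ThIdentityToStep}. One first confines solutions to the tube: replacing $v(t,y)$ by $v(t,\on{clip}_{[a(t),b(t)]}(y))$ does not alter the set of solutions lying between $a$ and $b$, and a comparison argument using $\dot a=v(\cdot,a)$ and $\dot b=v(\cdot,b)$ a.e.\ shows that every solution of the truncated equation issued from a point of $[a(t_0),b(t_0)]$ stays in $[a(t),b(t)]$; Peano's theorem then produces, through every point of the closed tube, a trajectory of $v$ contained in it. Hence the set $S$ of trajectories of $v$ with $a\le y\le b$ is non-empty and compact in $C([0,1])$; it is a lattice (a pointwise $\max$ or $\min$ of two trajectories is again one, since their derivatives agree a.e.\ on the set where they coincide), and in fact a \emph{complete} lattice, an arbitrary pointwise supremum inheriting the uniform time modulus of the second step and hence lying in $S$, with least element $a$ and greatest element $b$. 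One then extracts the family from $S$, for instance as
\[
y^\lambda\eqdef\sup\Bigl\{y\in S:\ \int_0^1\frac{y(t)-a(t)}{b(t)-a(t)}\,\mathbf{1}_{\{a(t)<b(t)\}}\,\ud t\le\lambda\Bigr\}
\]
after an affine change of the parameter $\lambda$, the endpoints coming out right because the relative–position functional vanishes only at $y=a$ and is maximal only at $y=b$. The main obstacle is precisely the \emph{continuity in the parameter} of this a priori only monotone selection: this is where the lattice and compactness structure of $S$, together with the uniform modulus, have to be used carefully, and it is the delicate point of the whole argument; everything else above is routine or parallels the earlier propositions.
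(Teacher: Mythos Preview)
Your approach differs substantially from the paper's. You read ``still associated with $v$'' literally and attempt to fill each gap $(\varphi(t,x_i^-),\varphi(t,x_i^+))$ by a continuous monotone family of integral curves of the \emph{original} field $v$, relying on Peano existence and a complete--lattice structure on the set $S$ of trajectories trapped in the tube. The paper does something else: on each gap it replaces $v$ by the explicit minimal--$H^1$--norm interpolant between the boundary data $v(t,\varphi(t,x_i^\pm))$---the $\sinh$ field of the appendix solving the $E_{sh}$ problem---and then decomposes time into the open intervals where the gap is nondegenerate. That interpolant is locally Lipschitz in the space variable inside the open gap, so the filling family comes directly from Cauchy--Lipschitz and Gr\"onwall (Proposition~\ref{prop:filling}), and continuity in the parameter is automatic. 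The price is that $\varphi_\varepsilon$ is a Lagrangian flow for a \emph{modified} vector field rather than for $v$ itself; for the downstream uses (Lemma~\ref{ThLagrangianEulerianEnergies}, Theorem~\ref{ThTightnessTheorem}, Corollary~\ref{ThCor}) this is harmless---indeed preferable---since only an energy bound matters and the interpolant realises the minimal norm on the gap.

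Your route, by contrast, has a genuine gap exactly where you flag it. The selection
\[
y^\lambda=\sup\bigl\{y\in S:\ F(y)\le\lambda\bigr\},\qquad F(y)=\int_0^1\frac{y-a}{b-a}\,\mathbf{1}_{\{a<b\}}\,\ud t,
\]
does lie in $S$ by the complete--lattice property, but the sublevel set $\{F\le\lambda\}$ is \emph{not} stable under pointwise maximum: for $y_1,y_2$ with $F(y_i)\le\lambda$ one may have $F(\max(y_1,y_2))>\lambda$ (think of $y_1-a$ and $y_2-a$ with nearly disjoint supports). Hence there is no reason for $F(y^\lambda)\le\lambda$, nor any mechanism forcing left-- or right--continuity of $\lambda\mapsto y^\lambda$; a jump there would leave a sub-interval of some $[a(t_0),b(t_0)]$ unattained and $\varphi_\varepsilon$ would still be discontinuous. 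The existence of a continuous monotone selection between $a$ and $b$ for a merely Carath\'eodory scalar field is plausible, but it is not a consequence of ``lattice and compactness'' alone; one essentially needs a Lipschitz regularisation of $v$ inside the tube (so that unique flows give a continuous family) followed by a stability/compactness limit---which is, in effect, what the paper's choice of the $\sinh$ interpolant accomplishes in one stroke.
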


\begin{proof}[Sketch of proof]
For the readability of the article, we give here the main arguments, see the proof \ref{prop:gfilling} in appendix for the details of the proof.
\par
Fix $\varepsilon >0$ a positive real number.
We use Lemma \ref{ThCountableDiscontinuities} to introduce the set of discontinuity of the Lagrangian flow. This jump set is at most countable, say $(x_i)_{i \in \N}$ so that we can choose a summable sequence of positive real numbers $\varepsilon_i$ such that $\sum_{i = 1}^\infty \varepsilon_i = \varepsilon$. Note that, for each $x_i$ there is a countable union of open time \emph{disjoint} intervals in $[0,1]$ such that $x_i$ is a discontinuity point of the flow. For each of these intervals indexed by $j$, choose a reference time $t_i^j$.
\par
Hereafter, we assume that $\varphi(t,x_i)$ is equal to either both left or right limit.
We define the positive Radon measure $\mu_\varepsilon \eqdef \sum_{i = 1}^\infty \varepsilon_i \delta_{x_i} +  \on{Leb}$. Then, the function $x \mapsto \mu_\varepsilon([0,x])$ is of bounded variations. Let us denote its inverse by $G_\varepsilon$ for a moment, although it is not well-defined at discontinuity points. Then, we define 
\begin{equation}\label{EqDefiningApproximatedFlow}
\varphi_\varepsilon(t,y) = \varphi(t,G_\varepsilon(y))
\end{equation}
which implies that $\varphi_\varepsilon$ is defined everywhere but not on $[\mu_\varepsilon([0,x_i])_-,\mu_\varepsilon([0,x_i])_+]$. On each of these intervals, we define $\varphi_\varepsilon$ to be the interpolation given by the flow of the minimal norm of the $H^1$ vector field that interpolates the boundary conditions $\partial_t \varphi(t,x_i)_- = v(t,\varphi(t,x_i)_-)$ and $\partial_t \varphi(t,x_i)_+ = v(t,\varphi(t,x_i)_+)$. This time dependent vector field can be integrated to give a flow that completely defines the map $\varphi_\varepsilon$. This vector field reproduces the minimal norm given by $v_\varphi$. However, in order to integrate the flow, we need to give the map $\varphi$ at a given time $t$ which interpolates between the two limits $[\varphi(t,x_i)_-,\varphi(t,x_i)_+]$ when they differ. This interpolation can be chosen arbitrarily for each time $t_i^j$.
\par
The more general case when the Lagrangian flow at a discontinuity point is not equal to its left or right limit can be addressed by introducing a measure $\mu_\varepsilon \eqdef \sum_{i = 1}^\infty (\varepsilon_i^+ + \varepsilon_i^-) \delta_{x_i} +  \on{Leb}$ which accounts for discontinuities on the left and on the right, namely $\varepsilon_i^-$ (resp. $\varepsilon_i^+$) takes care of the discontinuity $\varphi(t,x_i) - \varphi(t,x_i^-)$ (resp. $\varphi(t,x_i^+) - \varphi(t,x_i^-)$).
\par
Now, we reparametrize the (space) interval in order to satisfy the boundary conditions. We have constructed the approximation $\varphi_\varepsilon: [0,1] \times [0,1+ \varepsilon] \mapsto [0,1]$ and using the linear map $S_\varepsilon:x \mapsto x/(1+\varepsilon)$, one can defined $\tilde{\varphi}_\varepsilon(t,x) \eqdef \varphi_\varepsilon(t,S_\varepsilon(x)): [0,1]^2 \mapsto [0,1]$. Since the energy is completely defined on the vector field $v_\varepsilon$, it is left unchanged.
\par
As done in Lemma \ref{EqFlowStability}, Helly's selection theorem can be applied when $\varepsilon \to 0$, and thus, the sequence $\varphi_\varepsilon$ converges in $L^1(D)$ and using Formula \eqref{EqDefiningApproximatedFlow}, one concludes that its limit is $\varphi$ (in fact, its pointwise limit almost everywhere).
\end{proof}
Now, we are able to prove a change of variable formula, which follows from standard calculus in the smooth case, but which still holds in the framework of definition \ref{ThLagrangianFlowMap}.
\begin{lemma}
Let $\varphi$ be a Lagrangian flow associated with $v$ and let $\psi$ be any (generalized) inverse of $\varphi$ in the $x$ variable. Then, for every $C^2$ function $f$ on the domain $D$, it holds
\begin{equation}\label{EqWeakChangeOfVariable}
\langle \partial_x v,f \circ \psi \rangle = \langle \partial_{tx} \varphi , f \rangle\,.
\end{equation}
It defines $\partial_{tx} \varphi$ as a Radon measure.
\end{lemma}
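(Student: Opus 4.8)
The plan is to read the identity as a rigorous version of the chain rule $\partial_{tx}\varphi=\partial_x\big(v(t,\varphi(t,\cdot))\big)=(\partial_x v)(t,\varphi(t,\cdot))\,\partial_x\varphi$ followed by the change of variables $y=\varphi(t,x)$, and to carry this out through the auxiliary map $w(t,x)\eqdef v(t,\varphi(t,x))$. Since for each $x$ the curve $t\mapsto\varphi(t,x)$ is absolutely continuous with $\partial_t\varphi(t,x)=w(t,x)$ for a.e.\ $t$, one has $\partial_t\varphi=w$ as distributions on the interior of $D$, hence $\partial_{tx}\varphi=\partial_x w$ (distributional mixed derivatives commuting). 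For a.e.\ $t$ the function $v(t,\cdot)\in H^1_0$ is absolutely continuous, so $w(t,\cdot)=v(t,\cdot)\circ\varphi(t,\cdot)$ --- a composition of an absolutely continuous function with a monotone one --- is of bounded variation on $[0,1]$, with $\on{Var}\big(w(t,\cdot)\big)\le\|\partial_x v(t,\cdot)\|_{L^1}$; moreover $w\in L^2(D)$. Pairing $\partial_x w$ with test functions $f\in C^\infty_c((0,1)^2)$ and applying Fubini, the problem reduces to the \emph{single-time-slice identity}
\begin{equation*}
\langle D_x w(t,\cdot),\,g\rangle\;=\;\int_0^1 \partial_x v(t,y)\,g\big(\psi(t,y)\big)\,\ud y ,
\end{equation*}
to be established for a.e.\ $t$ and every $g\in C^\infty_c((0,1))$, where $D_x w(t,\cdot)$ denotes the distributional derivative of $w(t,\cdot)$ on the open interval.

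To prove the slice identity I would split the finite measure $D_x w(t,\cdot)$ into its atomic and diffuse parts. By Lemma~\ref{ThCountableDiscontinuities} the jump set of $\varphi(t,\cdot)$, hence also that of $w(t,\cdot)$ (a composition with the continuous function $v(t,\cdot)$), is contained in the fixed countable set $(x_i)_i$; the atom of $D_x w(t,\cdot)$ at an interior $x_i$ is $w(t,x_i^+)-w(t,x_i^-)=v(t,\varphi(t,x_i^+))-v(t,\varphi(t,x_i^-))=\int_{\varphi(t,x_i^-)}^{\varphi(t,x_i^+)}\partial_x v(t,s)\,\ud s$. On every open interval avoiding all the $x_i$ the map $\varphi(t,\cdot)$ is continuous and monotone, so the classical Lebesgue--Stieltjes change of variables gives $D_x w(t,\cdot)=(\partial_x v)(t,\varphi(t,\cdot))\,D^{\mathrm{diff}}_x\varphi(t,\cdot)$ there, hence on all of $(0,1)$ for the diffuse parts (the complement being countable). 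With $J(t)\eqdef\bigcup_i\big(\varphi(t,x_i^-),\varphi(t,x_i^+)\big)$ the set of values skipped by the jumps, pushing the diffuse part forward through $\varphi(t,\cdot)$ by means of the elementary identity $\big(\varphi(t,\cdot)\big)_{\#}\big(D^{\mathrm{diff}}_x\varphi(t,\cdot)\big)=\on{Leb}|_{[0,1]\setminus J(t)}$, together with $\psi(t,\varphi(t,x))=x$ for $D^{\mathrm{diff}}_x\varphi(t,\cdot)$-a.e.\ $x$ (the flat parts of $\varphi(t,\cdot)$ being null for this non-atomic measure), gives the diffuse contribution $\int_{[0,1]\setminus J(t)}\partial_x v(t,y)\,g(\psi(t,y))\,\ud y$. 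The atomic contribution is $\sum_i g(x_i)\int_{\varphi(t,x_i^-)}^{\varphi(t,x_i^+)}\partial_x v(t,s)\,\ud s=\int_{J(t)}\partial_x v(t,y)\,g(\psi(t,y))\,\ud y$, since $\psi(t,\cdot)$ is constant equal to $x_i$ on $\big(\varphi(t,x_i^-),\varphi(t,x_i^+)\big)$ and these intervals are pairwise disjoint with union $J(t)$ (intervals coming from a jump at an endpoint $0$ or $1$ contribute nothing, since $g$ vanishes there). Adding the two pieces gives the slice identity, and integrating in $t$ --- Fubini applies because the integrand is bounded by $\|g\|_\infty|\partial_x v|\in L^1(D)$ --- yields $\langle\partial_{tx}\varphi,f\rangle=\langle\partial_x v,f\circ\psi\rangle$ for every $f\in C^\infty_c((0,1)^2)$.

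The last assertion now comes for free: the right-hand side $\langle\partial_x v,f\circ\psi\rangle=\int_0^1\int_0^1\partial_x v(t,y)\,f(t,\psi(t,y))\,\ud y\,\ud t$ is precisely the pairing of $f$ with the push-forward of the finite signed measure $\partial_x v(t,y)\,\ud y\,\ud t$ under the Borel map $(t,y)\mapsto(t,\psi(t,y))$; this push-forward is a finite signed Radon measure on $D$ of total variation at most $\|\partial_x v\|_{L^1(D)}$, and by the previous paragraph it represents the distribution $\partial_{tx}\varphi$. Hence --- interpreting $\langle\partial_{tx}\varphi,f\rangle$ as the integral of $f$ against this measure --- the identity of the lemma holds for every $f\in C^2(D)$ (indeed every $f\in C^0(D)$), and it is independent of the choice of generalized inverse, two such inverses coinciding outside a Lebesgue-null subset of $[0,1]^2$.

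The step I expect to be most delicate is the one in the second paragraph: keeping the bookkeeping of jumps versus flat parts of $\varphi(t,\cdot)$ straight --- in particular, the atom of $D_x w(t,\cdot)$ at $x_i$ is $\int_{\varphi(t,x_i^-)}^{\varphi(t,x_i^+)}\partial_x v(t,s)\,\ud s$ and \emph{not} $(\partial_x v)(t,\varphi(t,x_i))$ times the size of the jump --- and justifying the push-forward formula $\big(\varphi(t,\cdot)\big)_{\#}\big(D^{\mathrm{diff}}_x\varphi(t,\cdot)\big)=\on{Leb}|_{[0,1]\setminus J(t)}$. A secondary but necessary point is the joint measurability in $(t,y)$ of $\psi(t,y)$ and of $w(t,x)=v(t,\varphi(t,x))$, needed to invoke Fubini; this follows from the Hölder-in-time, monotone-in-space regularity of $\varphi$ (as in Proposition~\ref{EqFlowStability}) and from $v\in L^2([0,1],H^1_0([0,1]))$.
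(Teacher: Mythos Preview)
Your argument is correct and takes a genuinely different route from the paper's own proof. The paper establishes the identity first for \emph{continuous} Lagrangian flows, where it reduces to a standard change-of-variable formula, and then invokes Proposition~\ref{ThGeneralFilling} to approximate an arbitrary Lagrangian flow $\varphi$ in $L^1(D)$ by continuous ones $\varphi_n$; since $L^1$ convergence of the $\varphi_n$ entails $L^1$ convergence of the inverses $\psi_n$, both sides of \eqref{EqWeakChangeOfVariable} pass to the limit (the left-hand side by strong $L^2$ convergence of $f\circ\psi_n$, the right-hand side after integrating by parts onto $f$). You instead work directly with the possibly discontinuous flow: identifying $\partial_{tx}\varphi$ with $\partial_x w$ for $w=v\circ\varphi$, slicing in time, and splitting the BV measure $D_xw(t,\cdot)$ into its diffuse part (handled via the push-forward identity $\varphi(t,\cdot)_{\#}D_x^{\mathrm{diff}}\varphi(t,\cdot)=\on{Leb}|_{[0,1]\setminus J(t)}$) and its atomic part (handled by the exact jump formula $\int_{\varphi(t,x_i^-)}^{\varphi(t,x_i^+)}\partial_xv(t,s)\,\ud s$). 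Your route is more self-contained---it needs only Lemma~\ref{ThCountableDiscontinuities} and elementary BV calculus, and it makes the structure of $\partial_{tx}\varphi$ as a measure fully explicit---whereas the paper's proof is shorter but rests on the heavier filling construction of Proposition~\ref{ThGeneralFilling}. The points you flag as delicate (the push-forward formula for the diffuse part, and the joint measurability needed for Fubini) are genuine but standard, and you have identified them correctly.
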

 
\begin{proof}
This formula is satisfied for a continuous Lagrangian flow due to the change of variable formula \cite{ChangeOfVariable}. Now, consider a Lagrangian flow which may have discontinuities, then, by Proposition \ref{ThGeneralFilling}, one can approximate it in $L^1(D)$ with continuous flows denoted by $\varphi_n$. Importantly, the $L^1$ convergence of $\varphi_n \to \varphi$ implies $L^1$ convergence of $\psi_n \to \psi$ for every choice of generalized inverses, since on $D$ the graphs of $\varphi$ and $\psi$ are symmetric w.r.t. the diagonal. Thus, $\psi_n$ converges in $L^1$ and Formula \eqref{EqWeakChangeOfVariable} holds true when passing to the limit; the left-hand side strongly converges in $L^2$ and the right-hand side also converges by integration by part on $f$.
\end{proof}

\begin{proof}[Proof of Theorem \ref{ThMainTheorem}]
By Proposition \ref{ThTransitivity}, the optimization set is non-empty. That is, between any two increasing maps $\varphi_0,\varphi_1$ on $[0,1]$, it is possible to find a Lagrangian flow $\varphi$ such that $\varphi(0,x) = \varphi_0(x)$ and $\varphi(1,x) = \varphi_1(x)$ for all $x \in [0,1]$.
\par
The existence of minimizers is implied by the stability result on the flow in Proposition \ref{EqFlowStability}.
\par
The right-invariance of $d$ is given by the composition of the flow maps, as well as the triangle inequality. The nonnegativity of $d$ is obvious and the fact $d(\varphi_0,\varphi_1) = 0$ implies pointwise equality follows from Equation \eqref{EqDistanceDominateSup}.
\par
We now prove completeness.
We first remark that the right-invariant distance dominates the pointwise sup norm, defined by
\begin{equation}
\| f \|_{\infty} = \sup_{x \in [0,1]} |f(x)|\,,
\end{equation}
and note that it is \emph{not} an essential supremum. We have
\begin{equation}\label{EqDistanceDominateSup}
\| \varphi_0 - \varphi_1 \|_\infty \leq 2 d(\varphi_0,\varphi_1)\,,
\end{equation}
which comes\footnote{The multiplicative factor $2$ in front of the distance is due to the fact that there is a $1/4$ factor in \eqref{EqNormDefinition}.} from the direct estimation, by application of Cauchy-Schwarz inequality,
\begin{align*}
| \varphi(t,x) - \varphi(0,x) | &= \left|\int_0^t v(s,\varphi(s,x)) \ud s \right| \leq \int_0^t \| v \|_{\infty} \ud s\\
& \leq \int_0^t \| v \|_{H^1} \ud s \leq \left(\int_0^1 \| v \|_{H^1}^2 \ud s\right)^{1/2}\,.
\end{align*}
Consider now a Cauchy sequence for $n$ a positive integer, $\varphi_n \in \on{Diff}([0,1])$. By the remark above, this sequence induces a sequence which uniformly converges under the sup norm. Therefore, it defines a limit map $\varphi_\infty \in \on{Mon}_+$, which is still nondecreasing.
\par
To prove that $d(\varphi_n,\varphi_\infty) \to 0$, consider a subsequence (without relabeling) such that $d(\varphi_n,\varphi_{n+1}) \leq 1/2^n$ and denote by $v_n$ a minimizer of the energy $L$.
It is sufficient to concatenate in time the vector fields $\tilde{v}_n$ for $n \geq N$ which are the unit speed parametrization of the vector field $v_n$ (if it is not already the case) over a (time) segment of length $d(\varphi_n,\varphi_{n+1})$. Thus, the resulting vector field $V_N$ is defined on the time interval $[0,\sum_{n = N}^\infty d(\varphi_n,\varphi_{n+1})]$. Now, we are left with proving that the corresponding flow at time $\sum_{n = N}^\infty d(\varphi_n,\varphi_{n+1})$ is equal to $\varphi_\infty$, but it is the result of the identification of the limit above. Therefore, this construction gives the estimation $d(\varphi_n,\varphi_\infty) \leq \sum_{n = N}^\infty d(\varphi_n,\varphi_{n+1}) \leq \sum_{n=N}^\infty 1/2^{N} \to_{N \to \infty} 0$.
\end{proof}

\begin{remark}[Uniqueness]
Note that general arguments for establishing uniqueness, such as strict convexity, do not hold here since the optimization problem is not convex. In fact, on $M = S_1$, rotational symmetry probably implies the existence of distinct minimizing geodesics.
On $M = [0,1]$, the rotational symmetry is broken and might be sufficient, together with the one dimensional context, for proving uniqueness. 
\end{remark}

\section{Lagrangian formulation and gamma convergence}\label{SecGamma}
In this paragraph, we are interested in the link between the Eulerian formulation that is well suited for the direct method of calculus of variations developed above and a pure Lagrangian formulation. It is important to note that to a Lagrangian flow correspond many different vector fields, unless $\varphi$ is continuous, or equivalently surjective. If $\varphi$ has discontinuity points, by minimizing over associated vector fields, it is possible to rewrite the Eulerian energy in terms of the Lagrangian map, once the discontinuity locations are fixed.
 
We now introduce a new estimate on smooth paths. 
\begin{lemma} \label{ThEstimate1} 
For a smooth path $\varphi(t)$ of bounded energy,
the function $\partial_x (\partial_t \varphi)^2$ is bounded in $L^1(t \times x)$ by a constant which only depends on the energy and thus $(\partial_t \varphi)^2$ is in $L^1(t,\on{BV}_x)$. 
\end{lemma}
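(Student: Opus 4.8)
The plan is to deduce the $L^1$ bound from a single Cauchy--Schwarz estimate pairing the two terms of the Lagrangian energy \eqref{Lagrangian}. First I would recall that for a smooth path $\varphi(t)\in\on{Diff}([0,1])$ the associated velocity field $v$, determined by $v(t,\varphi(t,x))=\partial_t\varphi(t,x)$, satisfies $(\partial_x v)(t,\varphi(t,x))=\partial_{tx}\varphi(t,x)/\partial_x\varphi(t,x)$, so that the change of variables $y=\varphi(t,x)$ identifies $\mathcal L(v)$ with $L(\varphi)$. Writing $E$ for this common (finite) value, one gets at once the two separate bounds $\int_0^1\!\int_0^1 (\partial_t\varphi)^2\,\partial_x\varphi\,\ud x\,\ud t \le E$ and $\int_0^1\!\int_0^1 (\partial_{tx}\varphi)^2/\partial_x\varphi\,\ud x\,\ud t \le 4E$.

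Next, since $\varphi$ is smooth and $\partial_x\varphi>0$ everywhere along a path in $\on{Diff}([0,1])$, I would compute $\partial_x\big((\partial_t\varphi)^2\big)=2\,\partial_t\varphi\,\partial_{tx}\varphi$ and factor the right-hand side as $2\big(\partial_t\varphi\,\sqrt{\partial_x\varphi}\big)\big(\partial_{tx}\varphi/\sqrt{\partial_x\varphi}\big)$. Integrating the absolute value over $[0,1]^2$ and applying Cauchy--Schwarz to this product gives
\[
\int_0^1\!\!\int_0^1 \big|\partial_x (\partial_t\varphi)^2\big|\,\ud x\,\ud t
\;\le\; 2\Big(\int_0^1\!\!\int_0^1 (\partial_t\varphi)^2\partial_x\varphi\Big)^{1/2}\Big(\int_0^1\!\!\int_0^1 \tfrac{(\partial_{tx}\varphi)^2}{\partial_x\varphi}\Big)^{1/2}
\;\le\; 2\sqrt{E}\cdot\sqrt{4E}=4E ,
\]
which is exactly the asserted $L^1(t\times x)$ bound, depending only on the energy.

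To pass to $(\partial_t\varphi)^2\in L^1(t,\on{BV}_x)$, I would note that for each $t$ the map $x\mapsto(\partial_t\varphi(t,x))^2$ is smooth, so its total variation on $[0,1]$ equals $\int_0^1|\partial_x(\partial_t\varphi)^2|\,\ud x$, and the estimate above bounds $\int_0^1 |D_x(\partial_t\varphi(t,\cdot))^2|([0,1])\,\ud t$ by $4E$. For the $L^1$ part of the $\on{BV}$ norm, the elementary estimate $\sup_x|v(t,x)|^2\le\int_0^1|\partial_x v(t,x)|^2\,\ud x\le 4\|v(t,\cdot)\|_{H^1}^2$ already used in the proof of Proposition \ref{EqFlowStability} gives $\int_0^1\!\int_0^1(\partial_t\varphi)^2\,\ud x\,\ud t=\int_0^1\!\int_0^1 v(t,\varphi(t,x))^2\,\ud x\,\ud t\le 4E$, hence $\int_0^1\|(\partial_t\varphi(t,\cdot))^2\|_{\on{BV}([0,1])}\,\ud t\le 8E$.

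I do not expect a genuine obstacle in this argument; the only point that deserves a word is that the factorization divides by $\partial_x\varphi$, which is harmless because the statement concerns bona fide smooth paths in $\on{Diff}([0,1])$, where $\partial_x\varphi>0$ everywhere, so that $\frac{(\partial_{tx}\varphi)^2}{\partial_x\varphi}$ is a continuous function and Cauchy--Schwarz applies with no truncation argument. When this estimate is later invoked with endpoints in $\on{Mon}_+$, one uses it on the smooth approximating paths furnished by the gamma-convergence construction.
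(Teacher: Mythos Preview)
Your argument is correct and is essentially the paper's own proof: the paper uses the pointwise Young inequality $|\partial_{tx}\varphi\,\partial_t\varphi|\le \tfrac12(\partial_t\varphi)^2\partial_x\varphi+\tfrac12\frac{(\partial_{tx}\varphi)^2}{\partial_x\varphi}$, while you apply Cauchy--Schwarz to the same factorization $\partial_t\varphi\,\partial_{tx}\varphi=(\partial_t\varphi\sqrt{\partial_x\varphi})(\partial_{tx}\varphi/\sqrt{\partial_x\varphi})$; the two differ only in the resulting constant. Your added remark bounding the $L^1$ part of the $\on{BV}$ norm is a harmless extra detail the paper omits.
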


\begin{proof}
This is given by the inequality
\begin{equation}
| \partial_t \partial_x \varphi \, \partial_t \varphi | \leq \frac 12 (\partial_t \varphi)^2 \partial_x \varphi +  \frac{1}{2}\frac{(\partial_{tx}\varphi)^2}{\partial_x \varphi} \,.
\end{equation}
The l.h.s is indeed $|\partial_x (\partial_t \varphi)^2|$.
\end{proof}

\begin{lemma}\label{ThLagrangianEulerianEnergies}
If the flow map $\varphi$ associated with $v$ is continuous in both $(t,x)$ and such that $\partial_t \varphi(t,x)\in L^2([0,1]^2)$, then the velocity field $v$ is uniquely defined. Moreover, the Lagrangian \eqref{Lagrangian} functional is equal to the Eulerian  functional \eqref{H1Norm}.\end{lemma}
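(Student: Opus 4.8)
I will prove the two assertions — that $v$ is determined by $\varphi$, and that the Lagrangian energy \eqref{Lagrangian} equals the Eulerian one \eqref{H1Norm} — separately; the second is a rigorous change of variables $y=\varphi(t,x)$, and the whole subtlety is that $\varphi(t,\cdot)$ is only monotone and continuous, with no Sobolev control in $x$, so every integral involving $\partial_x\varphi$ and $\partial_{tx}\varphi$ will have to be read in the sense of measures. For uniqueness, I first note that $\varphi(t,\cdot)\colon[0,1]\to[0,1]$ is onto for every $t$, being continuous with $\varphi(t,0)=0$ and $\varphi(t,1)=1$ (boundary values inherited from $\on{Mon}_+$ and from $v\in H^1_0$). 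If $v,\tilde v\in L^2([0,1],H^1_0([0,1]))$ both satisfy \eqref{EqFlow} with this $\varphi$, then for every $x$ and $s<t$ one has $\int_s^t\big(v(r,\varphi(r,x))-\tilde v(r,\varphi(r,x))\big)\,dr=0$, hence $v(t,\varphi(t,x))=\tilde v(t,\varphi(t,x))$ for a.e.\ $t$; picking a countable dense $\{x_j\}\subset[0,1]$ and a full-measure set of times at which this holds for all $j$ at once, the points $\varphi(t,x_j)$ are dense in $[0,1]$ by continuity and surjectivity, and since $v(t,\cdot),\tilde v(t,\cdot)$ are continuous they coincide. Thus $v=\tilde v$, and henceforth $\partial_t\varphi(t,x)$ denotes the everywhere-defined representative $v(t,\varphi(t,x))$.

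Next I set up the change of variables. Let $\mu_t:=D_x\varphi(t,\cdot)$, the nonnegative Radon measure on $[0,1]$ of mass $\varphi(t,1)-\varphi(t,0)=1$, and $\mu:=\mu_t(dx)\,dt$ on $D$. The identity driving the whole argument is that $\varphi(t,\cdot)$ pushes $\mu_t$ forward to Lebesgue measure: checking on $[0,y]$, $\mu_t(\{x:\varphi(t,x)\le y\})=\varphi\big(t,\sup\{x:\varphi(t,x)\le y\}\big)=y$ by continuity and surjectivity. Equivalently $(t,x)\mapsto(t,\varphi(t,x))$ pushes $\mu$ forward to Lebesgue measure on $D$, so that $\iint_D G(t,\varphi(t,x))\,d\mu=\iint_D G\,dy\,dt$ for every $G\in L^1(D)$. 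Taking $G=v^2$ already handles the zeroth-order terms: $\int_0^1 v(t,y)^2\,dy=\int_0^1\big(\partial_t\varphi(t,x)\big)^2\,d\mu_t(x)$, which after integration in $t$ is the first term of \eqref{Lagrangian}, with $\partial_x\varphi$ read as the measure $\mu_t$.

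For the gradient term I use the weak change of variables \eqref{EqWeakChangeOfVariable}. Applied with $f=g\circ\varphi$, $g\in C^0(D)$ (reached from the $C^2$ case by a routine mollification, using $\varphi(t,\psi(t,y))=y$), it reads $\iint_D g(t,y)\,\partial_x v(t,y)\,dy\,dt=\iint_D g(t,\varphi(t,x))\,d(\partial_{tx}\varphi)$ for all $g\in C^0(D)$. From this I deduce $\partial_{tx}\varphi\ll\mu$: a continuous monotone map sends $\mu_t$-null sets to Lebesgue-null sets (cover by disjoint open intervals; $\mu_t$ has no atoms), so feeding this into the displayed identity with $g$ approximating the indicator of the (Lebesgue-null) image of a $\mu$-null set forces $\partial_{tx}\varphi$ to vanish on it, the left-hand side being controlled via $\partial_x v\in L^2\subset L^1$. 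Writing $h:=d(\partial_{tx}\varphi)/d\mu$ and comparing the displayed identity with the pushforward formula applied to $G=g\,\partial_x v$ gives $\iint_D(g\circ\varphi)\,(h-\partial_x v\circ\varphi)\,d\mu=0$ for all $g\in C^0(D)$, whence $h=\partial_x v\circ\varphi$ $\mu$-a.e.\ because $(t,x)\mapsto(t,\varphi(t,x))$ is injective off the $\mu$-null union of the flat parts of the maps $\varphi(t,\cdot)$. Therefore $\iint_D(\partial_x v)^2\,dy\,dt=\iint_D(\partial_x v\circ\varphi)^2\,d\mu=\iint_D h^2\,d\mu$, the natural measure-reading of the second term of \eqref{Lagrangian}; summing the two contributions gives $\mathcal{L}(v)=L(\varphi)$.

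The hard part is not the formal substitution but making it rigorous with no regularity of $\varphi(t,\cdot)$ in $x$ beyond monotone continuity — a priori $\partial_x\varphi(t,\cdot)$ may have a singular part and $\varphi(t,\cdot)$ may have flat parts, which is why the Lagrangian functional must be read in the measure sense. Everything goes through thanks to the single fact $\varphi(t,\cdot)_\#\mu_t=\mathrm{Leb}$, which at once yields the two change-of-variables formulas, the Luzin-(N) property behind $\partial_{tx}\varphi\ll\mu$, and — together with the vanishing of $\mu_t$ on flat parts of $\varphi(t,\cdot)$ — the identification of the Radon–Nikodym density $h$ as $\partial_x v\circ\varphi$; the rest is an application of the already-established identity \eqref{EqWeakChangeOfVariable}.
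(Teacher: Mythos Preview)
Your argument is correct and takes a genuinely different route from the paper's. For the gradient term, the paper proves the equality $\tfrac14\int_D(\partial_x v)^2=\on{FR}(\partial_{tx}\varphi,\partial_x\varphi)$ as two separate inequalities: the bound $\|\partial_x v\|_{L^2}\le 2\,\on{FR}^{1/2}$ comes from Cauchy--Schwarz applied to $\langle\partial_{tx}\varphi,f\circ\varphi\rangle$, and the reverse uses the Legendre--Fenchel dual characterization \eqref{EqDualDefinition} of $\on{FR}$ together with a quadratic-discriminant trick on the test function $z=w\circ\varphi^{-1}$. You instead go for the equality in one stroke: from \eqref{EqWeakChangeOfVariable} with $f=g\circ\varphi$ you read off $\Phi_{\#}(\partial_{tx}\varphi)=\partial_x v\cdot\on{Leb}$, argue $\partial_{tx}\varphi\ll\mu$, identify the density as $\partial_x v\circ\varphi$ via essential injectivity of $\Phi$ off the flat set, and then the pushforward $\Phi_{\#}\mu=\on{Leb}$ gives $\int(\partial_x v)^2=\int h^2\,d\mu$ immediately. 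This is more direct and avoids the dual formulation entirely; the paper's two-inequality scheme, on the other hand, yields $\partial_x v\in L^2$ as a byproduct of the first inequality without presupposing any structure on $\partial_{tx}\varphi$.

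One point in your sketch deserves a line of care: when you approximate $\mathbf{1}_{\Phi(N)}$ by continuous $g$ and feed it into the identity, you control $\partial_{tx}\varphi$ on $\Phi^{-1}\Phi(N)$, not on $N$ itself, and $\partial_{tx}\varphi$ is signed. The gap closes because $\Phi^{-1}\Phi(N)\setminus N$ lies in the flat set, where $\partial_t\varphi(t,\cdot)=v(t,\varphi(t,\cdot))$ is locally constant and hence $\partial_{tx}\varphi$ already vanishes; applying the argument to each piece of a Hahn decomposition of the singular part then gives $|\partial_{tx}\varphi|$-absolute continuity. Alternatively, one can bypass this step by observing directly, for each fixed $t$, that the total variation of $v(t,\varphi(t,\cdot))$ over a union of intervals $U$ is bounded by $\int_{\varphi(t,U)}|\partial_x v(t,y)|\,dy$, which tends to zero with $\mu_t(U)$; this gives $\partial_x(\partial_t\varphi(t,\cdot))\ll\mu_t$ slice by slice, with density $\partial_x v(t,\varphi(t,\cdot))$, and integrating in $t$ yields your identity without invoking \eqref{EqWeakChangeOfVariable} at all.
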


\begin{proof}
Since the image of $\varphi(t,\cdot)$ is equal to the whole interval $[0,1]$ is a priori overly determined since the following equation has to be satisfied $\partial_t \varphi(t, x) = v(t,\varphi(t,x))$, or in other words $\partial_t \varphi(t,x)$ should be constant on level set of $\varphi(t,x)$. This is a required property of elements in $\mathcal{F}$ and the condition is satisfied a.e.
\par
We also have
\begin{equation}\label{EqEqualityKineticEnergy}
\int_D | \partial_t \varphi \circ \varphi^{-1}|^2 \ud x = \int_D | \partial_t \varphi |^2 \varphi^{-1}_*(\ud x) = \int_D |\partial_t \varphi |^2 \partial_x \varphi\,,
\end{equation}
where the last equality has a well-defined meaning since $\partial_x (\partial_t \varphi)^2$ is a Radon measure by Lemma \ref{ThEstimate1} and $\varphi$ is continuous on the domain $D$.
Now, we prove a first inequality by estimating $\int_D \partial_x v \, f $ for $f \in C^1(D)$,
\begin{align}
\int_D  v (-\partial_x f) &=- \int_D  v \circ \varphi \, \partial_x f \circ \varphi \, \partial_x \varphi \label{Eqtoto}\\
& = -\int_D  v \circ \varphi \, \partial_x (f \circ \varphi) = \int_D \partial_{xt} \varphi f\circ \varphi \label{Eqtutu} \\
& \leq 4\on{FR}(\partial_{tx} \varphi,\partial_x \varphi)^{1/2}\left( \int_D f^2 \circ \varphi \, \partial_x \, \varphi \right)^{1/2}  = 4\on{FR}(\partial_{tx} \varphi,\partial_x \varphi)^{1/2}\| f \|_{L^2(D)} \label{Eqtata}
\end{align}
where $\on{FR}(\partial_{tx} \varphi,\partial_x \varphi) = \frac 14 \int_D \frac{(\partial_{tx} \varphi)^2}{\partial_x \varphi} \ud x$ is the functional which we call Fisher-Rao. In the previous inequalities, we used 
the chain rule for the composition between $C^1$ and $BV$ functions between \eqref{Eqtoto} and \eqref{Eqtutu} and also to obtain the last inequality. We also applied the Cauchy-Shwarz inequality to obtain inequality \eqref{Eqtata}, since 
\begin{equation}
 \int_0^1 \int_{M} \frac{(\partial_{tx}\varphi_\eta)^2}{\partial_x \varphi_\eta} \ud x\ud t  \leq  \int_0^1 \int_{M} \frac{(\partial_{tx}\varphi)^2}{\partial_x \varphi} \ud x\ud t\,,
\end{equation}
where $\varphi_\eta \eqdef \eta \star \varphi$ a regularization of $\varphi$ by a kernel $\eta$ and convexity of the functional.
Passing by, we have proven that $\partial_x v$ is in $L^2([0,1])$.
\par
The second inequality is more involved and requires the use of the definition of the Fisher-Rao functional as a Legendre transform. One has
\begin{equation}\label{EqDualDefinition}
\on{FR}(\nu,\mu) = \sup_{u,w \in C^0(D)} \left[ \int_D u \ud \mu + \int_D w \ud \nu - \int_D \iota_K(u,w)  \ud t \ud x \right]  \,,
\end{equation}
where $\iota_K$ is the indicator function of the convex set
\begin{equation*}
K \eqdef \left\{ (\xi_1,\xi_2) \in \R^2 : \xi_1 + \xi_2^2 \leq 0  \right\}\,.
\end{equation*}
Actually, $\iota_K$ is the Legendre-Fenchel conjugate of 
 $r:\R \times \R \to \R_+ \cup \{+ \infty \}$ be the one-homogeneous convex function defined by
\begin{equation}
r(x,y) = 
\begin{cases}
\frac 14 \frac{y^2}{x} \text{ if } x > 0 \\
0 \text{ if } (x,y) = (0,0)\\
+\infty \text{ otherwise.}
\end{cases}
\end{equation}
Consider now $\varepsilon >0$ and a couple $(u,w) \in C^0(D)$ such that, the r.h.s. of Formula \eqref{EqDualDefinition} is greater than $\on{FR}(\partial_{tx} \varphi,\partial_x \varphi) - \varepsilon$.
Then, we will consider  $u = -w^2$ (without loss of generality). We choose a test function $z = w \circ \varphi^{-1}$, we have 
\begin{align*}
\langle \partial_x v,z \rangle &= \langle \partial_x v , w \circ \varphi^{-1}\rangle \\
& =\langle \partial_{x} v \circ \varphi \, \partial_x \varphi, w \rangle \\
& =\langle \partial_{tx} \varphi , w \rangle \\
& \geq \on{FR}(\partial_{tx} \varphi,\partial_x \varphi) - \varepsilon + \langle \partial_x \varphi, w^2 \rangle \\
& \geq \on{FR}(\partial_{tx} \varphi,\partial_x \varphi) - \varepsilon + \|z\|^2_{L^2(D)} \,.
\end{align*}
In particular, it implies that the polynomial function 
\begin{equation}
f(\lambda) = -\lambda \langle \partial_x v,z \rangle + \on{FR}(\partial_{tx} \varphi,\partial_x \varphi) - \varepsilon + \lambda^2 \|z\|^2_{L^2(D)}
\end{equation}
has a nonnegative discriminant, that is
\begin{equation}
| \langle \partial_x v,z \rangle |^2 \geq 4 (\on{FR}(\partial_{tx} \varphi,\partial_x \varphi) - \varepsilon) \| z\|^2_{L^2(D)}\,.
\end{equation}
These two inequalities and the first equality on the kinetic energy \eqref{EqEqualityKineticEnergy} give the claimed equality between the Lagrangian and Eulerian functionals.
\end{proof}

We now prove that if the initial and final diffeomorphisms are smooth enough, the minimization on vector fields that are in the same smoothness category gives the same minimization result than in Theorem \ref{ThMainTheorem}. The result is based on the right-invariance of the metric which enables the construction of smooth approximating sequences. Let us describe the underlying strategy developed in the proof below. Recall the result of Proposition \ref{ThGeneralFilling}: If a Lagrangian $\varphi(t,x)$ has a non-empty jump set, an approximation of it can be defined by introducing an interval at each jump point on which we will define a minimal norm interpolation. By right-invariance of the metric, this solution, which is defined on a larger interval than $[0,1]$, can be mapped to $[0,1]$ while preserving the total energy. This new solution provides a continuous path which is an approximation of the initial flow. Starting from this candidate, we use standard smoothing arguments, the main point consists in dealing with the boundary conditions.
\begin{theorem}\label{ThTightnessTheorem}
Let $\varphi_0,\varphi_1$ be two (different) $W^{1,1}$ (resp. $C^k$, $H^k$, $k \geq 1$) non decreasing functions on $[0,1]$ fixing the points $0,1$.
Let $\varphi$ be a Lagrangian flow associated with a vector field $v \in L^2([0,1],H^1)$, then, there exists a sequence of Lagrangian flows $\varphi_n$ converging in $L^1$ to $\varphi$ such that $\varphi_n$ is $W^{1,1}$ (resp. $C^k$) on $D$ and such that $\liminf L(\varphi_n) \leq \mathcal{L}(v)$. 
\end{theorem}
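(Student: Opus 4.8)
The plan is a three–step approximation of $\varphi$: first make the flow continuous at no energy cost, then mollify to gain regularity while keeping control of the functional, and finally repair the boundary data --- the last step being where the real work lies. \emph{Step 1 (reduction to a continuous flow with the right endpoints).} By Proposition~\ref{ThGeneralFilling}, for every $\delta>0$ there is a continuous Lagrangian flow $\Psi$ of the \emph{same} $v$ with $\|\Psi-\varphi\|_{L^1(D)}\le\delta$; being continuous, $\Psi$ is surjective in $x$, so $L(\Psi)=\mathcal{L}(v)$ by Lemma~\ref{ThLagrangianEulerianEnergies} and $\partial_{tx}\Psi$, $\partial_x\Psi$ are Radon measures. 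Its time–endpoints are, however, $\varphi_0\circ h_\delta$ and $\varphi_1\circ h_\delta$ rather than $\varphi_0,\varphi_1$, where $h_\delta\colon[0,1]\to[0,1]$ is the increasing surjection created by the reparametrisation in the filling construction (piecewise affine, plateaus of total length $\le\delta$, $h_\delta\to\mathrm{id}$ uniformly). I would fix this by prepending a Lagrangian flow from $\varphi_0$ to $\varphi_0\circ h_\delta$ and appending one from $\varphi_1\circ h_\delta$ to $\varphi_1$, each of energy $\to 0$: indeed $d(\mathrm{id},h_\delta)\to 0$ --- a flow collapsing the excess Lebesgue mass (of total amount $\le\delta$) onto the countable set of plateau locations, in the spirit of Lemma~\ref{ThIdentityToStep}, has $H^1$–cost $O(\delta)$ --- and right–invariance of $d$ (or a direct estimate when $\varphi_0,\varphi_1$ have flat parts) transfers the bound to $\varphi_0\circ h_\delta$ and $\varphi_1\circ h_\delta$. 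Concatenating these corrections over a time subinterval of vanishing length and rescaling time to $[0,1]$ adds only $o(1)$ to the energy; I relabel the resulting continuous flow, which has \emph{exact} endpoints $\varphi_0,\varphi_1$ and energy $\mathcal{L}(v)+o(1)$, again $\Psi$.

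\emph{Step 2 (mollification and energy control).} I would mollify $\Psi$ in $(t,x)$ with a nonnegative kernel $\rho_\eta$, which keeps $\Psi_\eta$ monotone in $x$ and smooth. The Fisher--Rao term does not increase: $\partial_{tx}\Psi_\eta=\rho_\eta*\partial_{tx}\Psi$, $\partial_x\Psi_\eta=\rho_\eta*\partial_x\Psi$, and by joint convexity and one–homogeneity of $r$ together with Jensen, $\on{FR}(\partial_{tx}\Psi_\eta,\partial_x\Psi_\eta)\le\on{FR}(\partial_{tx}\Psi,\partial_x\Psi)$. The kinetic term $\int_D(\partial_t\Psi)^2\partial_x\Psi$ is more delicate since $(a,b)\mapsto a^2b$ is not convex; here continuity of $\Psi$ is decisive: $\partial_x\Psi(t,\cdot)$ is non–atomic, $\partial_t\Psi(t,\cdot)=v(t,\Psi(t,\cdot))$ is continuous with $\sup_x|\partial_t\Psi(t,x)|\le\|v(t,\cdot)\|_{H^1_0}\in L^2_t$, and $t\mapsto\partial_x\Psi(t,\cdot)$ is weakly continuous; a Jensen step on the inner integral followed by dominated convergence against these measures yields $\int_D(\partial_t\Psi_\eta)^2\partial_x\Psi_\eta\le\int_D(\partial_t\Psi)^2\partial_x\Psi+o_\eta(1)$. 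Hence $L(\Psi_\eta)\le\mathcal{L}(v)+o_\eta(1)$ and $\Psi_\eta\to\Psi$ in $L^1(D)$.

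\emph{Step 3 (boundary conditions).} Before mollifying I would reparametrise time by an increasing surjection $\gamma$ of $[0,1]$ constant near $0$ and near $1$ (cost factor $\le(1-2c)^{-1}\to1$): the flow then rests at $\varphi_0$ near $t=0$ and at $\varphi_1$ near $t=1$, so if the spatial width of the mollifier is taken to vanish at $t=0,1$, the mollified flow still equals $\varphi_0$ at $t=0$ and $\varphi_1$ at $t=1$ exactly, and there its regularity is precisely that of $\varphi_0,\varphi_1$ --- i.e.\ $W^{1,1}$, resp.\ $C^k$, $H^k$, as claimed. The constraint $\Psi_\eta(t,0)=0$, $\Psi_\eta(t,1)=1$ is then restored by extending $\Psi$ in $x$ (by $0$ on $\{x<0\}$ and $1$ on $\{x>1\}$) before mollifying and applying the affine renormalisation $x\mapsto(\Psi_\eta(t,x)-\Psi_\eta(t,0))/(\Psi_\eta(t,1)-\Psi_\eta(t,0))$, which is smooth in $(t,x)$ and monotone in $x$; its extra cost tends to $0$ because $v(t,0)=v(t,1)=0$ makes $\partial_t\Psi$ small near $x=0,1$, so that $t\mapsto\Psi_\eta(t,0),\Psi_\eta(t,1)$ have $L^2_t$–small time–derivatives. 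A diagonal extraction over $\eta\to0$, $c\to0$, $\delta\to0$ then gives $\varphi_n\to\varphi$ in $L^1(D)$ with $\varphi_n$ of the required regularity, the exact boundary data, and $\liminf_n L(\varphi_n)\le\mathcal{L}(v)$.

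\emph{Expected main obstacle.} As the discussion preceding the statement anticipates, the crux is Step~3: one must land \emph{exactly} on the prescribed, possibly non-smooth, endpoints $\varphi_0,\varphi_1$ (which forces the correction flows together with the estimate $d(\mathrm{id},h_\delta)\to0$) and \emph{exactly} on the spatial values $0,1$ (handled by the affine renormalisation, whose cost is controlled precisely by the $H^1_0$ --- not merely $H^1$ --- condition on $v$). The other delicate point is the kinetic term under mollification, whose integrand is not convex; the estimate works only because the reduction to a continuous flow in Step~1 makes $\partial_x\Psi(t,\cdot)$ non-atomic.
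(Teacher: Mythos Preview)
Your three–step outline — fill jumps, mollify, repair boundaries — is exactly the paper's strategy, and Step~1 together with the Fisher--Rao half of Step~2 match the paper. The two substantive divergences are in the kinetic term and in the boundary repair.

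For the kinetic term $\int_D (\partial_t\Psi_\eta)^2\,\partial_x\Psi_\eta$, your ``Jensen on the inner integral followed by dominated convergence'' is where the paper does something sharper and where your argument has a gap. After the Jensen step $(\rho_\eta*\partial_t\Psi)^2\le \rho_\eta*(\partial_t\Psi)^2$ you still have to pass to the limit in $\int(\rho_\eta*(\partial_t\Psi)^2)\,d(\rho_\eta*\partial_x\Psi)$, and \emph{both} factors move with $\eta$; there is no fixed dominating measure to invoke. The paper's device is Lemma~\ref{ThEstimate1}: the pointwise inequality $|\partial_x(\partial_t\varphi)^2|=2|\partial_{tx}\varphi\,\partial_t\varphi|\le (\partial_t\varphi)^2\partial_x\varphi+(\partial_{tx}\varphi)^2/\partial_x\varphi$ shows that $\partial_x(\partial_t\Psi)^2$ is a Radon measure with total mass bounded by the energy. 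One then integrates by parts, transferring the $x$–derivative from $\partial_x\Psi_\eta$ onto the mollified factor, and pairs the \emph{fixed} measure $\partial_x(\partial_t\Psi)^2$ against $\rho_\eta*\Psi$, which converges uniformly to $\Psi$ by continuity. The missing ingredient in your sketch is precisely this BV bound on $(\partial_t\Psi)^2$.

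For the boundary repair the paper takes a much shorter route than your time–freezing / variable–width mollifier / affine–renormalisation scheme. It mollifies with a fixed kernel (after extending by $0$ and $1$ outside $[0,1]$ and composing with the affine map $c_\eta$), accepts that $\varphi_\eta(0,\cdot)$ lands near but not on $\varphi_0$, and shows $\|\partial_x\varphi_\eta(0,\cdot)-\partial_x\varphi_0\|_{L^1}\to 0$. Proposition~\ref{ThmControlHellinger} --- the action between two $W^{1,1}$ maps is controlled by the Hellinger distance $\int|\sqrt{\partial_x\varphi_1}-\sqrt{\partial_x\varphi_0}|^2$, realised by the explicit square–root interpolation --- then supplies a correction path of cost $o(1)$, and concatenation finishes. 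This single estimate also absorbs the endpoint shift created by the filling, so your separate $d(\mathrm{id},h_\delta)\to 0$ argument in Step~1 becomes unnecessary. Your scheme could perhaps be made rigorous, but the claims that the variable–width mollifier preserves the energy bound and that the affine renormalisation costs $o(1)$ (which would need uniform-in-$t$ smallness of $\Psi(t,x)$ near $x=0,1$, not obviously available) require real work, whereas the Hellinger route is a direct computation once Proposition~\ref{ThmControlHellinger} is in hand.
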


\begin{proof}
Proposition \ref{ThGeneralFilling} implies that it is sufficient to prove the result on a continuous Lagrangian flow. Therefore, we consider a continuous Lagrangian flow associated with a vector field $v \in L^2([0,1],H^1_0)$. 
\par
\emph{Regularization by convolution: }
Assume that the boundary conditions are $C^k$, or $H^k$ for $k\geq 1$.
By the steps above, we now have a curve of continuous maps $\varphi(t,x)$ and we aim at approximating it by a smooth curve by convolution with a smooth and compactly supported kernel $k_\eta$ where $\eta$ is the width parameter. Note that in particular the boundary conditions at $x=0$ and $x=1$ are not preserved for sufficiently small $\eta$. Therefore, we extend $\varphi(t,x)$ by $0$ if $x < 0$ and $1$ if $x > 1$. As a result, the support of $\psi_\eta = k_\eta \star \varphi$ is contained in $[-\eta,1+\eta]$. Let $c_{\eta}$ be the affine map that transforms $[0,1]$ in $[-\eta,1+\eta]$ and define $\varphi_\eta \eqdef (k_\eta \star \varphi) \circ c_\eta$, which is a nondecreasing map in $\on{Mon}_+$. We compute
\begin{equation}
L(\varphi_\eta) =  \int_0^1 \int_0^1 (\partial_t \varphi_\eta)^2 \partial_x \varphi_\eta + \frac{1}{4}\frac{(\partial_{tx}\varphi_\eta)^2}{\partial_x \varphi_\eta} \ud x\ud t \,.
\end{equation}
We first make the change of variable with $c_\eta$ to obtain
\begin{equation}
L(\varphi_\eta) =  \int_0^1 \int_{-\eta}^{1+\eta} (\partial_t \psi_\eta)^2 \partial_x \psi_\eta + \frac{1}{4}\frac{(\partial_{tx}\psi_\eta)^2}{\partial_x \psi_\eta} \ud x\ud t \,.
\end{equation}
The second term (Fisher-Rao) is convex in $(\partial_{tx} \psi_\eta,\partial_x \psi_\eta)$ and as a consequence
\begin{equation}
 \int_0^1 \int_{-\eta}^{1+\eta} \frac{(\partial_{tx}\psi_\eta)^2}{\partial_x \psi_\eta} \ud x\ud t  \leq  \int_0^1 \int_{-\eta}^{1+\eta} \frac{(\partial_{tx}\varphi)^2}{\partial_x \varphi} \ud x\ud t\,.
\end{equation}
The first term is not convex but we can use convexity of the quadratic term to obtain
\begin{equation}
 \int_0^1  \int_{-\eta}^{1+\eta} (\partial_t \psi_\eta)^2 \partial_x \psi_\eta \ud x\ud t  \leq  \int_0^1  \int_{-\eta}^{1+\eta} k_\eta \star (\partial_t \varphi)^2 \partial_x \psi_\eta \ud x\ud t\,,
\end{equation}
we integrate by part to get
\begin{equation}
 \int_0^1  \int_{-\eta}^{1+\eta} -\partial_x (\partial_t \psi_\eta)^2  \psi_\eta \ud x\ud t  \leq  \int_0^1  \int_{-\eta}^{1+\eta} -\partial_x (\partial_t \varphi)^2  \psi_{2\eta} \ud x\ud t\,.
\end{equation}
Now, the second term converges to $\int_0^1 \int_0^1 -\partial_x (\partial_t \varphi)^2  \varphi \ud x\ud t$ since $\psi_{2\eta}$ uniformly converges to $\varphi$, due to its continuity. It implies that $ \lim_{\eta \to 0} L(\varphi_\eta)\leq L(\varphi)$. 
\par
\emph{Boundary conditions: }In order to finish the proof, we now take care of the boundary conditions at time $0$ and $1$. Recall that the energy $L$ represents the kinetic energy of a path in a space of maps, therefore it is possible to concatenate paths while the energy is subadditive (up to a positive multiplicative constant). We will prove that the evaluations at times $0,1$ of the maps $\varphi$, which are at least $W^{1,1}$, are close in the Hellinger distance on the jacobians. Therefore, we then conclude using Proposition \ref{ThmControlHellinger} and concatenation of paths, by noting that the interpolation of the square roots preserve regularity.
\\
We consider $\varphi_\eta(t=0)$ (the case $t = 1$ is similar) and we write 
\begin{multline}\label{EqFirstEstimate}
\| \partial_x \varphi_\eta - \partial_x \varphi \|_{L^1} \leq \| \partial_x c_\eta \partial_x \psi_\eta \circ c_\eta - \partial_x \psi_\eta \circ c_\eta\|_{L^1} \\+ \|\partial_x \psi_\eta \circ c_\eta - \partial_x \varphi \circ c_\eta \|_{L^1} + \| \partial_x \varphi \circ c_\eta - \partial_x \varphi \|_{L^1}\,.
\end{multline}
Since the space of Lipschitz functions is dense in $L^1$, there exists a Lipschitz function $f$ such that 
$\| f - \partial_x \varphi \|_{L^1} \leq \varepsilon$ and thus we have
\begin{align*}
\| \partial_x \varphi \circ c_\eta - \partial_x \varphi \|_{L^1} &\leq \| \partial_x \varphi \circ c_\eta - f\circ c_\eta \|_{L^1} +  \| f - f\circ c_\eta \|_{L^1} +  \| f - \partial_x \varphi \|_{L^1} \\
&\leq (1+2\eta)^{-1}\varepsilon + \on{Lip}(f)|\partial_x c_\eta - 1| + \varepsilon \,\\
&\leq (1+2\eta)^{-1}\varepsilon + 2\eta\on{Lip}(f) + \varepsilon \,.
\end{align*}
Using the previous estimate in \eqref{EqFirstEstimate}, we obtain
\begin{equation}
\| \partial_x \varphi_\eta - \partial_x \varphi \|_{L^1} \leq \frac{2\eta}{1 + 2\eta} + \|\partial_x \psi_\eta \circ c_\eta - \partial_x \varphi \circ c_\eta \|_{L^1} + (1+2\eta)^{-1}\varepsilon + 2\eta\on{Lip}(f) + \varepsilon\,.
\end{equation}
Since the second term converges to $0$ with $\eta$, the maps $\varphi_\eta$ and $\varphi$ are close in $W^{1,1}$ norm.
\end{proof}

\begin{proposition}\label{ThmControlHellinger} Let $\varphi_0, \varphi_1 \in W^{1,1}([0,1])$ be two non decreasing functions such that $\varphi_0(0)=\varphi_1(0)$ and $ \varphi_0(1)=\varphi_1(1)=1$. Then there exists $\varphi:[0,1]\times [0,1]$ such that $\varphi(0, \cdot)=\varphi_0$ and $\varphi(1, \cdot)=\varphi_1$  and moreover

$$\int_0^1 \int_0^1 (\partial_t \varphi)^2 \partial_x \varphi  + \frac{ (\partial_{tx} \varphi)^2}{\partial_x \varphi} \,dx \, dt \leq C \int_0^1 | \sqrt{\partial_x \varphi_1} - \sqrt{\partial_x \varphi_0}|^2 \, d x,$$
where $C$ is a universal constant.
\end{proposition}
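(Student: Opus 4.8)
The plan is to build $\varphi$ by interpolating linearly the \emph{square roots} of the spatial derivatives and then rescaling so as to keep the total mass constant. Concretely, set $c \eqdef \varphi_0(0) = \varphi_1(0)$, $m^2 \eqdef 1 - c$, and $a_i \eqdef \sqrt{\partial_x\varphi_i} \in L^2([0,1])$; these are nonnegative and, by absolute continuity of $\varphi_i$, satisfy $\|a_i\|_{L^2}^2 = \varphi_i(1) - \varphi_i(0) = m^2$. The quantity governing the estimate is $H^2 \eqdef \|a_1 - a_0\|_{L^2}^2$, the right-hand side of the claim, and since $a_0,a_1 \geq 0$ pointwise we have $\langle a_0, a_1\rangle \geq 0$, hence $H^2 = 2m^2 - 2\langle a_0,a_1\rangle \leq 2m^2$. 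I would then define $a(t,\cdot) \eqdef (1-t)a_0 + t a_1$, record the identity $\|a(t)\|_{L^2}^2 = m^2 - t(1-t)H^2 \geq m^2/2$, set $\tilde a(t,\cdot) \eqdef m\,a(t,\cdot)/\|a(t)\|_{L^2}$ (the radial projection of $a(t)$ onto the $L^2$-sphere of radius $m$), and finally $\varphi(t,x) \eqdef c + \int_0^x \tilde a(t,y)^2\,\ud y$. Checking the constraints is immediate: $\partial_x\varphi = \tilde a^2$ is nonnegative and in $L^1$, so $\varphi(t,\cdot) \in W^{1,1}$; $\varphi(t,0) = c$ and $\varphi(t,1) = c + \|\tilde a(t)\|_{L^2}^2 = c + m^2 = 1$; and $\|a(0)\|_{L^2} = \|a(1)\|_{L^2} = m$ forces $\tilde a(0) = a_0$, $\tilde a(1) = a_1$, hence $\varphi(0,\cdot) = \varphi_0$ and $\varphi(1,\cdot) = \varphi_1$. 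The same formula visibly preserves $C^k$ or $H^k$ regularity when $\partial_x\varphi_i$ is bounded away from $0$, since then $a(t,\cdot)$ never vanishes.

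For the energy bound I would estimate the two terms separately. From $\partial_{tx}\varphi = 2\tilde a\,\partial_t\tilde a$ one gets $(\partial_{tx}\varphi)^2/\partial_x\varphi = 4(\partial_t\tilde a)^2$ where $\tilde a \neq 0$, both sides vanishing on $\{\tilde a = 0\}$ (there $a_0 = a_1 = 0$, so $\partial_t\tilde a = 0$ as well), whence $\int_0^1\int_0^1 (\partial_{tx}\varphi)^2/\partial_x\varphi\,\ud x\ud t = 4\int_0^1\|\partial_t\tilde a(t)\|_{L^2}^2\,\ud t$. Since $\tilde a(t)$ is the image of $a(t)$ under $y \mapsto m\,y/\|y\|$, whose differential is $m/\|y\|$ times an orthogonal projection, $\|\partial_t\tilde a(t)\|_{L^2} \leq \frac{m}{\|a(t)\|_{L^2}}\,\|a_1 - a_0\|_{L^2} \leq \sqrt 2\,H$, so this term is at most $8H^2$. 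For the kinetic term, $\partial_t\varphi(t,x) = 2\int_0^x \tilde a\,\partial_t\tilde a\,\ud y$, so Cauchy-Schwarz gives $|\partial_t\varphi(t,x)| \leq 2\|\tilde a(t)\|_{L^2}\|\partial_t\tilde a(t)\|_{L^2} = 2m\|\partial_t\tilde a(t)\|_{L^2}$; together with $\int_0^1\partial_x\varphi\,\ud x = m^2 \leq 1$ and the bound on $\|\partial_t\tilde a\|_{L^2}$, this yields $\int_0^1\int_0^1 (\partial_t\varphi)^2\partial_x\varphi\,\ud x\ud t \leq 8m^4 H^2 \leq 8H^2$. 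Summing the two contributions gives the inequality with, say, $C = 16$.

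The only genuine difficulty, and the reason for the rescaling step, is that the naive choice $\varphi(t,x) = c + \int_0^x a(t,y)^2\,\ud y$ fails the endpoint constraint $\varphi(t,1) = 1$, because $\|a(t)\|_{L^2}^2$ picks up the cross term $-t(1-t)H^2$. Projecting $a(t)$ back onto the sphere of radius $m$ restores the correct mass at every time while keeping the velocity $\partial_t\tilde a$ of the same size as $a_1 - a_0$; what makes this projection harmless is precisely the positivity of the square roots $a_0,a_1$, which forces $\langle a_0,a_1\rangle \geq 0$ and hence $\|a(t)\|_{L^2}^2 \geq m^2/2$ uniformly in $t$, so the projection never approaches the origin and its differential stays bounded by $\sqrt 2$.
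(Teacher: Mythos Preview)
Your proof is correct and uses exactly the same construction as the paper: linearly interpolate the square roots $a(t)=(1-t)\sqrt{\partial_x\varphi_0}+t\sqrt{\partial_x\varphi_1}$ and then rescale by $m/\|a(t)\|_{L^2}$ to restore the endpoint constraint, which is precisely the paper's $\varphi(t,x)=\tilde\varphi(t,x)/(1-t(1-t)d^2)$ (with $c=0$, $m=1$). The difference is only in the bookkeeping of the estimates: the paper computes $\partial_t\sqrt{\partial_x\varphi}$ and $\partial_t\varphi$ explicitly and bounds them term by term, whereas you observe that the rescaling is the radial projection onto the $L^2$-sphere and use that its differential has operator norm $m/\|a(t)\|\le\sqrt 2$, which packages the same computation more cleanly and yields the sharper constant $C=16$ in place of the paper's $C=144$.
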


\begin{proof} Let us define $d^2=\int_0^1 | \sqrt{\partial_x \varphi_1} - \sqrt{\partial_x \varphi_0}|^2 \, d x$ which is obviously different from $0$ if and only if the two increasing functions differ. Then let us consider $f_t(x)= t \sqrt{ \partial_x \varphi_1} + (1-t) \sqrt{ \partial_x \varphi_0}$; We define $\tilde{\varphi}(t,x)= \int_0^x f_t^2 (y)\, d y$. We have $\tilde{\varphi} (t,0)=0$ but $\tilde{\varphi}(t,1)=1-t(1-t) d^2$. Since we want $\varphi(t,1)=1$ also, we will define $\varphi(t,x)=\tilde{\varphi} (t,x) / (1-t(1-t)d^2)$. Then we have

$$ \sqrt{ \partial_x \varphi } =\frac{ \sqrt{ \partial_x \tilde{\varphi} }}   {\sqrt{1-t(1-t)d^2}} =  \frac { f_t }{\sqrt{1-t(1-t)d^2}}= \frac { t \sqrt{ \partial_x \varphi_1} + (1-t) \sqrt{ \partial_x \varphi_0} }{\sqrt{1-t(1-t)d^2}} $$

$$ \partial_t \sqrt{ \partial_x \varphi } = \frac {  \sqrt{ \partial_x \varphi_1} - \sqrt{ \partial_x \varphi_0} }{\sqrt{1-t(1-t)d^2}} - \sqrt{ \partial_x \varphi } \frac {(t-1/2)d^2}{1-t(1-t)d^2}  $$

$$\int_0^1 |\partial_t \sqrt{ \partial_x \varphi }|^2 \, dx  \leq  2 \frac { d^2 }{1-t(1-t)d^2}  + 2 \frac {(t-1/2)^2d^4}{(1-t(1-t)d^2)^2}.  $$

Now we can use that $d^2 \leq 2$ and $t(1-t) \leq 14$ to obtain 
$$\int_0^1 |\partial_t \sqrt{ \partial_x \varphi }|^2 \, dx \leq 4d^2 + 2d^4. $$

In a similar manner, we shall compute $\partial_t \varphi$. Let us start with computing $\partial_x\varphi$:

$$ \partial_x \varphi = \frac { \partial_x \varphi_0 + 2t ( \sqrt{ \partial_x \varphi_1} \sqrt{ \partial_x \varphi_0 } - \partial_x \varphi_0 ) + t^2 ( \sqrt{ \partial_x \varphi_1 } - \sqrt{ \partial_x \varphi_0} )^2 }{1-t(1-t)d^2} $$ 

\begin{multline}\varphi(t,x) = \int_0^x \partial_x \varphi \, dy\\= \frac { \varphi_0(x,t) + 2t 	\int_0^x( \sqrt{ \partial_x \varphi_1} \sqrt{ \partial_x \varphi_0 } - \partial_x \varphi_0 ) \, dy + t^2 \int_0^x ( \sqrt{ \partial_x \varphi_1 } - \sqrt{ \partial_x \varphi_0} )^2  \, dy}{1-t(1-t)d^2}\,.
\end{multline} 

Let us notice that $\varphi(t,x)$ has a very simple expression in $t$. In fact we have $\varphi(t,x)= \tilde{\varphi}(t,x) / (1-t(1-t)d^2)$ where $\tilde{\varphi}$ is a quadratic polynomial in $t$ for every $x$. Then we have

$$ \partial_t \tilde{\varphi}  = 2 \int_0^x \sqrt{ \partial_x \varphi_0} (\sqrt{ \partial_x \varphi_1 } - \sqrt{\partial_x \varphi_0} ) \, dy + 2t \int_0^x ( \sqrt{ \partial_x \varphi_1 } - \sqrt{ \partial_x \varphi_0} )^2  \, dy$$
\begin{align*} |\partial_t \tilde{\varphi}|  & \leq 2 \left(\int_0^x \partial_x \varphi_0 \,dy \right)^{1/2} \cdot \left( \int_0^x (\sqrt{ \partial_x \varphi_1 } - \sqrt{\partial_x \varphi_0} )^2 \, dy \right)^{1/2} + 2td^2 \\
& \leq 2 \sqrt{\varphi_0} d + 2td^2.
\end{align*}

Then we compute $\partial_t \varphi$:
$$\partial_t \varphi = \frac{\partial_t \tilde{\varphi}}{1-t(1-t)d^2}- \varphi \frac{(2t-1)d^2}{1-t(1-t)d^2}$$
\begin{align*}
| \partial_t \varphi| & \leq \frac{|\partial_t \tilde{\varphi}|}{1-t(1-t)d^2} + \varphi \frac{|2t-1|d^2}{1-t(1-t)d^2} \\
& \leq \frac{  2 \sqrt{\varphi_0} d + 2td^2 + \varphi |2t-1|d^2 }{1-t(1-t)d^2} \\
& \leq 4d + 6d^2.
\end{align*}

In the end we can conclude using that $d^2 \leq 2$\footnote{We can take $C=144$ for example.}:

$$\int_0^1 \int_0^1 (\partial_t \varphi)^2 \partial_x \varphi  + \frac{ (\partial_{tx} \varphi)^2}{\partial_x \varphi} \,dx \, dt \leq (4d+6d^2)^2 + 4d^2+2d^4 \leq C d^2.$$
\end{proof}

As a corollary of the relaxation theorem, it is possible to rewrite the functional only in terms of the Lagrangian map $\varphi$.
\begin{corollary}\label{ThCor}
The relaxation of the functional \eqref{Lagrangian} for initial and final conditions in $W^{1,1}$ is the following
\begin{multline}\label{EqExtension}
L(\varphi) =  \int_0^1 \int_{S_1} (\partial_t \varphi)^2 \partial_x \varphi^c + \frac{1}{4}\frac{(\partial_{tx}\varphi^c)^2}{\partial_x \varphi^c} \ud x\ud t \\+  \int_0^1\sum_{\on{Jumps}(\varphi)}  (\varphi(x_+) - \varphi(x_-))\left((\partial_t \varphi(x_-))^2 + (\partial_t \varphi(x_+))^2 \right) \on{coth}(\varphi(x_+) - \varphi(x_-)) \\- \frac{2 (\varphi(x_+) - \varphi(x_-))}{\on{sinh}(\varphi(x_+) - \varphi(x_-))} \partial_t \varphi(x_+) \partial_t \varphi(x_-)\ud t\,,
\end{multline}
where  $\varphi^c$ is the continuous part of $\varphi$.
\end{corollary}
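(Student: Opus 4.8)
The plan is to combine the relaxation theorem (Theorem~\ref{ThTightnessTheorem}), which identifies $L(\varphi)$ with the value $d^2$ of Theorem~\ref{ThMainTheorem}, with an explicit resolution of the one-dimensional interpolation problem that appears in the General Filling construction of Proposition~\ref{ThGeneralFilling}. Concretely: for a Lagrangian flow $\varphi$ with continuous part $\varphi^c$ and jump set $(x_i)_i$, with jump sizes $\delta_i(t) = \varphi(t,x_i^+) - \varphi(t,x_i^-)$, the relaxed functional $L(\varphi)$ equals the infimum of the Eulerian energy $\mathcal{L}(v)$ over all $v \in L^2([0,1],H^1_0([0,1]))$ whose Lagrangian flow has this prescribed continuous part and jump data. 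So it is enough to carry out this minimization over $v$ and read off the minimal value.

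First I would fix a.e.\ time $t$ and decompose the target interval $[0,1]$ as the disjoint union of the image $\mathrm{Im}\,\varphi(t,\cdot)$ and the open gaps $J_i(t) = (\varphi(t,x_i^-),\varphi(t,x_i^+))$ left uncovered at the jump points. On $\mathrm{Im}\,\varphi(t,\cdot)$ the flow constraint $\partial_t\varphi = v\circ\varphi$ forces $v$ almost everywhere; transporting the energy back by $\varphi(t,\cdot)$ and invoking the change of variables formula \eqref{EqWeakChangeOfVariable} together with the computation in the proof of Lemma~\ref{ThLagrangianEulerianEnergies} applied to the continuous part, this portion of $\mathcal{L}(v)$ equals $\int_0^1\int_0^1 (\partial_t\varphi)^2 \partial_x\varphi^c + \frac{1}{4}\frac{(\partial_{tx}\varphi^c)^2}{\partial_x\varphi^c}\,\mathrm{d}x\,\mathrm{d}t$, the first line of \eqref{EqExtension}. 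On each gap $J_i(t)$ the only surviving constraint on $v(t,\cdot)$ is continuity, i.e.\ the Dirichlet data $v = \partial_t\varphi(t,x_i^-)$ at the left endpoint and $v = \partial_t\varphi(t,x_i^+)$ at the right one, so the contribution of the gaps to $\mathcal{L}(v)$ splits into independent one-dimensional minimizations.

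Next I would solve, on an interval of length $\delta = \delta_i(t)$, the problem $\min \int_0^\delta v^2 + \frac{1}{4}(v')^2\,\mathrm{d}x$ with boundary values $v(0) = \alpha$, $v(\delta) = \beta$. The Euler--Lagrange equation is $v'' = 4v$, so the minimizer is the unique linear combination of $\cosh$ and $\sinh$ matching the data; integrating by parts against $v'' = 4v$ collapses the minimal energy to the boundary term $\frac{1}{4}(\beta v'(\delta) - \alpha v'(0))$, which, after substituting the explicit minimizer, is an expression of the form $a(\delta)(\alpha^2+\beta^2) - b(\delta)\alpha\beta$ with $a,b$ built from $\coth$ and $1/\sinh$. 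Up to elementary normalisations this is the jump integrand of \eqref{EqExtension}; summing over the at most countably many jump points (Lemma~\ref{ThCountableDiscontinuities}) and integrating in $t$ gives the full formula, and the value is attained rather than only approached by the existence part of Theorem~\ref{ThMainTheorem}.

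The step I expect to be the main obstacle is the rigorous decoupling used in the second paragraph. One must justify that the optimal $v$ on $\mathrm{Im}\,\varphi(t,\cdot)$ and the optimal $v$ on each gap $J_i(t)$ can be chosen independently even though they must agree (by continuity of $v$ in $x$) at the shared endpoints, and that the interior value $\varphi(t,x_i)$ inside the gap --- which a priori imposes $v(t,\varphi(t,x_i)) = \partial_t\varphi(t,x_i)$ --- does not affect the relaxed value, the reason being that the precise values of $\varphi$ at its jump points live on a Lebesgue-null set in $x$ and are irrelevant to the $L^1$-lower-semicontinuous envelope; a clean way to make this precise is to pass to the limit $\varepsilon \to 0$ in the explicitly computed energies of the filled flows $\varphi_\varepsilon$ of Proposition~\ref{ThGeneralFilling}. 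A subsidiary point is that $\varphi^c$ need not be strictly increasing, so the change-of-variables identity for the continuum part must be used in its non-surjective form, accounting for flat pieces of $\varphi^c$ and for the additive effect of the jump velocities on $\partial_t\varphi$ along the image.
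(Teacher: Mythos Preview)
Your proposal is correct and follows essentially the same strategy as the paper's proof: compute the minimal $H^1$ interpolant explicitly on each gap (the $\cosh/\sinh$ calculation), use Lemma~\ref{ThLagrangianEulerianEnergies} to identify the Lagrangian and Eulerian energies on the continuous part, and make the decoupling rigorous by passing through the filled continuous flows $\tilde\varphi_\varepsilon$ of Proposition~\ref{ThGeneralFilling}. The paper's three-sentence proof simply names these ingredients; your version fills in the details and correctly anticipates that the clean route around the decoupling issue (interior jump values, non-surjectivity) is exactly the $\varepsilon\to 0$ limit of the filled flows, together with right-invariance to carry the continuous energy back from $\tilde\varphi_\varepsilon$ to $\varphi^c$.
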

\begin{proof}
The result follows by  computing explicitly the minimal $H^1$ norm interpolant $v_\varphi$ at the jump set. Then, we use the constructed map $\tilde{\varphi}_\varepsilon$ which has the same energy than $v_\varphi$ on which we apply lemma \ref{ThLagrangianEulerianEnergies} which proves that the Lagrangian and Eulerian functionals on $\tilde{\varphi}_\varepsilon$ coincide. Now, the continuous part of the energy also satisfies right-invariance by a diffeomorphism so that the energy of the continuous parts of $\tilde{\varphi}_\varepsilon$ and $\varphi$ are equal.
\end{proof}
\begin{remark}
When the total variation of jumps converge to $0$, then the functional converges to the continuous part.
\end{remark}

\section{Short time geodesics are length minimizing}

In this section, we discuss the links with the results from \cite[Theorem 6.4]{Andrea2018} or \cite[Theorem 23]{GALLOUET2017} stating that solutions of the geodesic equation for sufficiently smooth initial conditions are length minimizers. The functional \eqref{Lagrangian} can be rewritten as 
\begin{equation}
\tilde{L}(z) = \inf_{z(t,x)} \int_0^1 \int_{0}^1 \| \partial_t z \|^2 \ud x\ud t 
\end{equation}
where $z = \sqrt{\partial_x \varphi(t,x)} e^{i \varphi(t,x)}$. Such a reformulation and its relaxation has been used in \cite[Theorem 23]{GALLOUET2017} to prove that smooth geodesics are length minimizing for short times. The general idea consists in rewriting the minimization problem on $(\varphi,\lambda)$ with an additional variable $\lambda$ as $z(t,x) = \lambda(t,x)e^{i\varphi(t,x)}$ and to put the constraint $\lambda(t,x) = \sqrt{\partial_x \varphi(t,x)}$ on $[0,1]$. Importantly, this constraint can be rewritten as a pushforward constraint, $\varphi_*(\lambda^2) = 1$ which allows the definition of a relaxation functional on the space of measures on the cone $[0,1] \times \R_{+}$ as proposed in \cite{Andrea2018}. 
The relaxed variational problem can be proven (see below) to be a lower bound for our variational relaxation \eqref{EqExtension} since the monotonicity is preserved whereas it is not the case for measure valued solutions in \cite{Andrea2018}. In fact, using this construction, it is possible to construct minimizing paths between maps that are not in the connected component of identity. 
For instance, there exists a generalized minimizing geodesic between the identity map and the "hat" map $x \in S_1 \mapsto 2x  \text{ mod } 1$ which  is obviously not a diffeomorphism. 
\par
In this present work, we have computed in Section \ref{SecGamma} the tight relaxation of the minimization problem \eqref{H1Norm}. Therefore, if smooth geodesics of the CH equation are short time minimizers in the sense of generalized solutions in \cite{Andrea2018}, they are a fortiori minimizers for our tight relaxation. Indeed, every generalized Lagrangian flow \eqref{ThCor} can be described as a generalized solution of \cite{Andrea2018}. We first remark that every flow $\varphi(t,x)$ which is $W^{1,1}$ in space can be lifted as a measure on the set of paths on the cone. Setting
\begin{align}
\iota_\varphi: [0,1] &\mapsto C([0,1],[0,1] \times \R_+)\\
x &\mapsto (t \mapsto (\varphi(t,x),\partial_x \varphi(t,x)))
\end{align}
so that $[\iota_\varphi]_*(\on{Leb})$ gives the probability measure on the path space of the cone. In the case there are jumps developing on the Lagrangian map, this definition needs to be adapted. Let us state the following theorem which is a consequence of \cite[Corollary 6.5]{Andrea2018}.
\begin{theorem}
Let $\varphi$ be a smooth solution of the Camassa-Holm equation on the time interval $[0,T]$. Equation \eqref{EqCHEulerian} can be rewritten as 
\begin{equation}
\begin{cases}
\frac{1}{2} \partial_{tx} u + \frac 14 (\partial_x u)^2  + \frac 12 u \partial_{xx} u -  u^2 = -2p \\
\partial_{t} u + 2 \partial_x u u  = -\partial_x p \,.
\end{cases}
\end{equation}
which defines the so-called pressure $p: [0,T] \times [0,1] \mapsto \R$.
If the following operator norm bound is valid
\begin{equation}\label{EqInequality11}
T^2 \left| \begin{pmatrix} \partial_{xx}p & 2\partial_x p \\ 2\partial_x p & 2p \end{pmatrix} \right| \leq \pi^2\,,
\end{equation}
then the solution $\varphi$ is a minimizer for the relaxed formulation \eqref{EqExtension} on the time interval $[0,T]$. Moreover, if the inequality \eqref{EqInequality11} is strict, then it is the unique minimizer.
\end{theorem}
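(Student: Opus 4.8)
The plan is to reduce the statement to the already-cited result \cite[Corollary 6.5]{Andrea2018}, which provides short-time minimality among the \emph{generalized} (measure-valued on the cone) solutions, and then to upgrade this to minimality for our tight relaxation \eqref{EqExtension}. The first step is to verify that the rewriting of \eqref{EqCHEulerian} in terms of $u$ and the pressure $p$ is correct: one substitutes $v = u$ and collects the third-order terms of \eqref{EqCHEulerian} into the operator $(1 - \frac14 \partial_{xx})$ applied to a momentum, inverts it with the Green's function of $1 - \frac14\partial_{xx}$ on $[0,1]$ with the Dirichlet boundary conditions $v(t,0)=v(t,1)=0$, and \emph{defines} $p$ by that convolution; the two displayed equations are then just the elliptic equation for $p$ together with the transport form of the momentum equation. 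This is a routine computation and I would only sketch it.

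The core of the argument is the comparison of variational problems. By Corollary \ref{ThCor} (and the gamma-convergence Theorem \ref{ThTightnessTheorem}), the value of our relaxation \eqref{EqExtension} between two $W^{1,1}$ endpoints equals $\inf \mathcal{L}(v)$ over Lagrangian flows, which in turn, via the lift $\iota_\varphi$, is bounded below by the relaxed cone functional $\tilde{L}$ of \cite{Andrea2018}: every generalized Lagrangian flow in the sense of \eqref{EqExtension} pushes forward under $\iota_\varphi$ (suitably adapted at jumps, splitting the mass $\varphi(x_+)-\varphi(x_-)$ into a family of cone-trajectories interpolating the two boundary velocities) to an admissible measure on path space with the same action — the $\on{coth}$/$\on{sinh}$ expression in \eqref{EqExtension} is precisely the minimal cone-energy of such an interpolation, so equality of actions holds. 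Hence
$$
\tilde L(\varphi_0,\varphi_1) \;\le\; L_{\eqref{EqExtension}}(\varphi_0,\varphi_1)\;\le\; L(\varphi)
$$
for any competitor $\varphi$. Now \cite[Corollary 6.5]{Andrea2018}, under the spectral bound \eqref{EqInequality11}, asserts that the smooth CH solution realizes $\tilde L$; since the smooth solution is itself an admissible competitor for \eqref{EqExtension} with $L(\varphi)=\tilde L$ (its lift is a genuine cone trajectory, no jumps), all three quantities coincide and $\varphi$ is a minimizer for \eqref{EqExtension}. For uniqueness when \eqref{EqInequality11} is strict: strictness gives, via \cite[Corollary 6.5]{Andrea2018}, that the smooth solution is the unique minimizing cone-measure; any other minimizer $\tilde\varphi$ of \eqref{EqExtension} would lift to a minimizing cone-measure, forcing its lift to coincide with that of $\varphi$, and since the lift determines $\varphi(t,\cdot)$ (the first cone-coordinate recovers the map, monotonicity is preserved along the relaxation) we get $\tilde\varphi=\varphi$.

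The main obstacle is the second inequality above — checking that \emph{every} element of our relaxation, including those with a nontrivial jump set, genuinely lifts to an admissible generalized solution of \cite{Andrea2018} \emph{with no increase of action}. The continuous part is handled by $\iota_\varphi$ directly, but at each jump one must produce, for a.e.\ time $t$, a probability measure on cone-paths joining $(\varphi(t,x_-),\cdot)$ to $(\varphi(t,x_+),\cdot)$ whose kinetic energy equals the explicit jump term in \eqref{EqExtension}; this amounts to recognizing that term as the value of the one-dimensional optimal-transport-type problem on the cone $[0,1]\times\R_+$ solved in \cite{Andrea2018}, i.e.\ the minimal-$H^1$-interpolant computation already performed in the proof of Corollary \ref{ThCor}. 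Once that identification is in place, the chain of inequalities closes and the theorem follows; the remaining points (well-posedness of $p$ with the Dirichlet boundary data, measurability in $t$ of the jump interpolants) are technical and I would relegate them to the construction already used for Proposition \ref{ThGeneralFilling}.
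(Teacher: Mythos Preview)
Your proposal is correct and follows essentially the same route as the paper: lift every competitor in the relaxation \eqref{EqExtension} to a measure on cone-paths with equal action (handling jumps via the minimal-$H^1$ interpolant, exactly the filling construction of Proposition~\ref{ThGeneralFilling}), then invoke \cite[Corollary 6.5]{Andrea2018}. The one technical step you do not name is that the lifted measure on path space is \emph{finite but not a probability} (each jump-filling contributes extra mass $\varphi(t_{ij},x_i)_+-\varphi(t_{ij},x_i)_-$, and the paper chooses the reference times so these sums are finite); the paper then appeals to \cite[Lemma 4.5]{Andrea2018} to renormalize to a probability measure with the same marginals and energy, after which your chain of inequalities closes exactly as you describe.
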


\begin{proof}
The core of the proof consists in lifting a flow defined in \eqref{EqExtension} as a probability measure on the set of paths on the cone which is the relaxation space constructed in \cite{Andrea2018} which has the same cost. To do so, we use a similar construction to Proposition \ref{ThGeneralFilling}. The set of jumps being countable, one can index by position and time the set of jumps: $(x_i,t_{ij})$ for $i,j \in \N$. Moreover, one can choose the times $t_{ij}$ in such a way that 
\begin{equation}
\sum_{i,j} \varphi(t_{ij},x_i)_+ - \varphi(t_{ij},x_i)_- < \infty\,,
\end{equation}
because for a given jump $x_i,t_{ij}$, one can decrease $t_{ij}$ such that the size of the jump is arbitrarily close to $0$.
Then, the flow of $v$ minimizing the action defined by the condition $\on{id}$ at time $t_{ij}$ is well defined on $[ \varphi(t_{ij},x_i)_- , \varphi(t_{ij},x_i)_+ ]$ and during a time interval $[t_{ij}^-,t_{ij}^+]$ timepoints such that the jump disappears. Outside the time interval $[t_{ij}^-,t_{ij}^+]$, we extend the paths defined on the cone by $0$. Therefore, one considers
\begin{equation}
[\iota_\varphi]_*(\on{Leb}) + \sum_{i,j}[\iota_\varphi]_*(\mathbf{1}_[ \varphi(t_{ij},x_i)_- , \varphi(t_{ij},x_i)_+] \on{Leb})\,.
\end{equation}
This measure is finite on the path space and satisfies the marginal constraints.
However, it is not a probability measure and it can be alleviated by  using \cite[Lemma 4.5]{Andrea2018} which gives the existence of a probability measure which still satisfies the marginal constraints and which has the same energy. 
Then, the proof is a consequence of \cite[Corollary 6.5]{Andrea2018}. For the equivalent formulation of the Camassa-Holm equation using the "pressure" term, we refer the reader to \cite[Appendix, Equation (A.5)]{AndreaEmbedding}. 
\end{proof}
Actually, the proof of the previous theorem implies,
\begin{corollary}
The tight relaxation \ref{ThCor} is contained in the relaxation à la Brenier developed in \cite{Andrea2018} since every generalized solution of \ref{ThCor} correspond to a least one generalized flow in \cite{Andrea2018}.
\end{corollary}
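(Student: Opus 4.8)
The plan is to unwind the lifting construction from the proof of the preceding theorem and to observe that nothing there used the smoothness of the Camassa--Holm solution: it applies verbatim to an arbitrary admissible competitor of \eqref{EqExtension}. Concretely, I would show that for every generalized Lagrangian flow $\varphi$ admissible in the tight relaxation \ref{ThCor} there is a probability measure $\sigma$ on the path space $C([0,1],[0,1]\times\R_+)$ of the cone which is admissible for the relaxation à la Brenier of \cite{Andrea2018} and satisfies $\int \big(\int_0^1 \|\partial_t z\|^2\,dt\big)\,d\sigma(z) = L(\varphi)$ with $L$ as in \eqref{EqExtension}. Since such a $\sigma$ is then a competitor for the Brenier-type problem, its infimum is $\le L(\varphi)$ for every admissible $\varphi$, hence $\le$ the value of the tight relaxation; this is exactly the statement that the latter is ``contained in'' the former, and the map $\varphi\mapsto\sigma$ is the claimed correspondence assigning to each generalized solution of \ref{ThCor} at least one generalized flow of \cite{Andrea2018}.

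First I would use Corollary \ref{ThCor} to split an admissible $\varphi$ into its continuous part $\varphi^c$, which is $W^{1,1}$ in space, plus a countable family of jumps located at points $(x_i)_i$, each active on a countable union of pairwise disjoint time intervals. On the continuous part the map $\iota_{\varphi^c}$ pushes $\on{Leb}$ forward to a probability measure on path space whose time-$t$ marginal, read through the cone projection weighted by $\lambda^2$, is exactly $\on{Leb}$ on $[0,1]$ — i.e. the pushforward constraint $\varphi_*(\lambda^2)=1$ of \cite{Andrea2018} holds — and whose energy equals the continuous part of \eqref{EqExtension} by the change-of-variables formula \eqref{EqWeakChangeOfVariable} together with Lemma \ref{ThLagrangianEulerianEnergies}. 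For the jumps I would argue as in the preceding theorem: index them by $(x_i,t_{ij})$, choosing the representative times so that $\sum_{i,j}\big(\varphi(t_{ij},x_i)_+ - \varphi(t_{ij},x_i)_-\big)<\infty$ (possible since decreasing $t_{ij}$ shrinks the corresponding jump to $0$), fill each gap $[\varphi(t_{ij},x_i)_-,\varphi(t_{ij},x_i)_+]$ by the flow of the minimal-$H^1$-norm interpolant over a time interval along which the jump closes, extend those cone-paths by $0$ outside that interval, and add the contributions $\sum_{i,j}[\iota_\varphi]_*\big(\mathbf{1}_{[\varphi(t_{ij},x_i)_-,\varphi(t_{ij},x_i)_+]}\,\on{Leb}\big)$ to $[\iota_{\varphi^c}]_*(\on{Leb})$. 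The result is a finite measure of total mass $1+\sum_{i,j}\big(\varphi(t_{ij},x_i)_+-\varphi(t_{ij},x_i)_-\big)$ satisfying the marginal constraints and with energy equal to the full functional \eqref{EqExtension}, jump terms included; normalizing it by means of \cite[Lemma 4.5]{Andrea2018} produces a genuine probability measure with the same marginals and the same energy.

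The hard part is the bookkeeping at the jump set: one must check that the lifts of the minimal-norm interpolants carry precisely the right mass, placed so that when summed against $[\iota_{\varphi^c}]_*(\on{Leb})$ the time-$t$ marginal is Lebesgue for \emph{every} $t$ and not merely for almost every $t$; that the assembled family of cone-paths depends measurably on the base point; and that the energy of the lift is exactly \eqref{EqExtension}, including the $\coth$ and $\sinh$ jump terms. This last point is where the explicit form of the minimal interpolant $v_\varphi$ computed in the proof of Corollary \ref{ThCor} is used, and it is the only genuinely non-formal step; everything else is a transcription of the lifting carried out in the preceding theorem, so the corollary follows.
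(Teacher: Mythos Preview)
Your proposal is correct and follows essentially the same approach as the paper: the paper's entire proof of this corollary is the one-line observation that the lifting construction in the proof of the preceding theorem already does the job, and you have simply spelled that construction out in more detail (indexing the jumps by $(x_i,t_{ij})$ with summable jump sizes, filling each gap by the minimal-$H^1$ interpolant flow, extending by $0$, summing with $[\iota_\varphi]_*(\on{Leb})$, and normalizing via \cite[Lemma 4.5]{Andrea2018}). Your additional remarks on the bookkeeping at the jump set are more explicit than anything the paper writes, but the route is the same.
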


\section{Perspectives}
In this article, we computed the tight relaxation of the boundary value problem associated with geodesics for the $H^1$ right-invariant metric on the group of diffeomorphisms of the unit interval with boundary conditions. We have shown that the relaxation of the problem can be defined on the space of nondecreasing maps of the unit interval which is the metric completion of the smooth diffeomorphism group.
An interesting issue is the smoothness of optimal paths for smooth boundary conditions. It is a natural question to study the regularity property of the minimizers in terms of that of the boundary conditions. In particular, we conjecture that discontinuities in the minimizing geodesics do not appear.  If so, the situation would be very different from the two dimensional case where measure solutions appear even if the boundary conditions are smooth (see \cite{Andrea2018}). Numerical simulations could help to rule out the emergence of discontinuities in the optimal path.
\par 
Although we did not address the case of $S_1$, the method should carry over straightforwardly.

\appendix 
\section{Filling the jumps}
Let us fix $\ep$ and $c$ and we define $En  : \mathbb{R} \setminus \{c\} \to \mathbb{R}$ and $Sq :\mathbb{R} \to \mathbb{R}$ as 
$$ En(x)= \begin{cases} x \qquad & \text{ if }x<c \\x+ \ep & \text{ if }x>c \end{cases} \qquad \qquad Sq(x)= \begin{cases} x \qquad & \text{ if }x<c \\c & \text{ if }c \leq x \leq c+\ep \\x- \ep & \text{ if }x>c+\ep. \end{cases}$$
In this way $Sq \circ En = id$.

\begin{definition}\label{def:FG} Let us consider a set $X=\{ x_i \}_{i \in I} \subset (a,b)$, where $I$ is either finite or countable. Let us consider also $ \ep_i \in (0,1) $ such that $\sum_{ i \in I} \ep_i = \ep$. Then we define jump function $F:[a, b] \to [a, b+ \ep]$ and the stairs function $G: [a,b+\ep] \to [a,b]$ in the following way:
$$  F(x) = x + \sum_{ x_i < x } \ep_i, \qquad G(y) = \inf\{ x \in [a,b]\; : \; F(x) \geq y\}.  $$
\end{definition}

This definition is suited for opening gaps in correspondence of some points, translating the behavior of the function. While it is pretty clear what happens if $I$ is finite we need the following lemma in order to use some properties of $F$ and $G$.

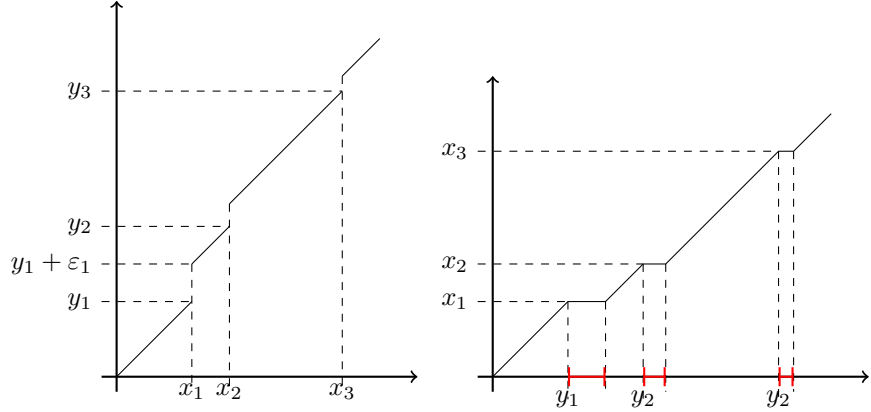
\begin{figure}
\centering

\begin{tikzpicture}
\draw[thick,->] (-0.2,0)  -- (4,0);
\draw[thick,->] (0,-0.2) -- (0,5);
\draw[-] (0,0) -- (1,1);
\draw[dashed] (1,1.5) -- (1,-0.2) node {$x_1$};
\draw[dashed] (-0.2,1) node[anchor=east] {$y_1$} -- (1,1);
\draw[dashed] (-0.2,1.5) node[anchor=east] {$y_1+ \ep_1$} -- (1,1.5);
\draw[-] (1,1.5) -- (1.5,2);
\draw[dashed] (1.5,2.3) -- (1.5,-0.2) node {$x_2$};
\draw[dashed] (-0.2,2) node[anchor=east] {$y_2$} -- (1.5,2);
\draw[-] (1.5,2.3) -- (3,3.8);
\draw[dashed] (3,4) -- (3,-0.2) node {$x_3$};
\draw[dashed] (-0.2,3.8) node[anchor=east] {$y_3$} -- (3,3.8);
\draw[-] (3,4) -- (3.5,4.5);

\draw[thick,->] (4.8,0)  -- (10,0);
\draw[thick,->] (5,-0.2) -- (5,4);
\draw[-] (5,0) -- (6,1) -- (6.5,1) -- (7,1.5) -- (7.3,1.5) -- (8.8,3) -- (9,3) --(9.5,3.5);
\draw[dashed] (4.8,1) node[anchor=east] {$x_1$} -- (6,1) ;
\draw[dashed] (4.8,1.5) node[anchor=east] {$x_2$} -- (7,1.5) ;
\draw[dashed] (4.8,3) node[anchor=east] {$x_3$} -- (8.8,3) ;
\draw[dashed] (6,1) -- (6,-0.3) node {$y_1$};
\draw[dashed] (7,1.5) -- (7,-0.3) node {$y_2$};
\draw[dashed] (8.8,3) -- (8.8,-0.3) node {$y_2$};
\draw[dashed] (6.5,1) -- (6.5,-0.2);
\draw[dashed] (7.3,1.5) -- (7.3,-0.2);
\draw[dashed] (9,3) -- (9,-0.2);
\draw[thick, red, |-|] (6,0) -- (6.5,0) ;
\draw[thick, red, |-|] (7,0) -- (7.3,0) ;
\draw[thick, red, |-|]  (8.8,0) -- (9,0);

\end{tikzpicture}
\caption{On the left the graph of $F$ and on the right the graph for $G$. In red is indicated the set $\mathcal{N}$.} \label{fig:FG}
\end{figure}

\begin{lemma}\label{lem:FG} Let $X$, $F$ and $G$ as in the previous definition. Then, letting $F(x_i)=y_i$ and $\mathcal{N}=\bigcup_{i \in I} (y_i, y_i+\ep_i)$ we have
\begin{itemize}
\item[(i)] $G(F(x))=x$;
\item[(ii)] $t \leq G(y)$ iff $F(t) \leq y$;
\item[(iii)] for every continuous $\phi \in BV(a,b)$ we have $F_{\sharp} D\phi = D (\phi \circ G)$;
\item[(iv)] for every $f \in L^1(a,b)$ we have $F_{\sharp} ( f \mathcal{L}|_{(a,b)}) = f \circ G \cdot  \mathcal{L}|_{(a,b+\ep) \setminus \mathcal{N}}$
\end{itemize}
\end{lemma}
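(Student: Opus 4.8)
The strategy is to reduce everything to the two elementary facts recorded just before the statement, namely $Sq\circ En = \mathrm{id}$, and to handle the countable case by a monotone/continuity argument. First I would prove (ii), since it is the workhorse: $F$ is nondecreasing and left-continuous (the sum $\sum_{x_i<x}\ep_i$ only counts points strictly below $x$), hence $\{t : F(t)\le y\}$ is a closed interval of the form $[a,G(y)]$ by the very definition $G(y)=\inf\{x : F(x)\ge y\}$; the left-continuity of $F$ is exactly what makes the infimum attained, so $t\le G(y)\iff F(t)\le y$. Then (i) is immediate: $F(x_i)=y_i$ and by (ii) with $y=y_i$ we get $G(y_i)\ge x_i$, while $F(x)\ge y_i$ for any $x>x_i$ forces $G(y_i)\le x_i$; for non-jump points $x$ the map $F$ is strictly increasing at $x$ and the same two-sided squeeze applies, giving $G(F(x))=x$ everywhere on $[a,b]$.

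For (iv) I would argue as follows. The set $[a,b+\ep]\setminus\mathcal N$ is precisely the image $F([a,b])$ up to the countably many jump values $y_i$ themselves (a Lebesgue-null set), and on this set $G$ is the genuine inverse of $F$ by (i). Since $F(x)=x+\sum_{x_i<x}\ep_i$ differs from the identity by a pure-jump nondecreasing function, $F$ is a bijection from $[a,b]$ onto $F([a,b])$ with $F'=1$ a.e.\ (the jump part has zero a.c.\ derivative); equivalently $G'=1$ a.e.\ on $[a,b+\ep]\setminus\mathcal N$. Therefore, for any Borel $A\subseteq[a,b+\ep]$,
$$
F_\sharp\big(f\,\mathcal L|_{(a,b)}\big)(A)=\int_{(a,b)} \mathbf 1_A(F(x))\,f(x)\,dx
=\int_{A\setminus\mathcal N} f(G(y))\,dy,
$$
where the change of variables $y=F(x)$, $x=G(y)$ is justified because $F$ is absolutely continuous, injective, with $F'=1$ a.e., and $\mathcal N$ (being disjoint from the range of $F$ modulo a null set) carries no mass of the pushforward. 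This is exactly (iv).

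Finally (iii): for $\phi\in BV(a,b)\cap C(a,b)$, both sides are Radon measures on $(a,b+\ep)$, so it suffices to test against $C^1_c$ functions $g$, i.e.\ to check $\int g\,dF_\sharp D\phi = \int g\,d(D(\phi\circ G))$. The right-hand side is $-\int \phi(G(y))\,g'(y)\,dy$ by integration by parts (note $\phi\circ G$ is continuous on $(a,b+\ep)\setminus\{y_i\}$ and constant equal to $\phi(x_i)$ on each gap $(y_i,y_i+\ep_i)$, so it is still $BV$ and continuous on the complement of a countable set, with no atoms); applying (iv) with $f=\phi$ this equals $-\int_{(a,b)}\phi(x)\,g'(F(x))\,dx$, while the left-hand side is $\int_{(a,b)} g(F(x))\,dD\phi(x)$, and these agree by the change-of-variables/integration-by-parts identity $\int g(F(x))\,dD\phi = -\int \phi(x)\,(g\circ F)'(x)\,dx$ together with $(g\circ F)'(x)=g'(F(x))$ a.e. The main obstacle is the bookkeeping in the countable case: one must make sure that $F$ is left-continuous (so that $G$ is a right inverse everywhere, not just a.e.), that $\mathcal N$ is genuinely omitted from the pushforward, and that the BV function $\phi\circ G$ does not acquire spurious atoms on the gaps --- all of which hinge on the continuity of $\phi$ and on $\sum_i\ep_i<\infty$ guaranteeing that $\mathcal N$ has finite measure and $F$ maps onto a full-measure subset.
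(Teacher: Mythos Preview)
Your overall plan is sound and each item can be made rigorous along the lines you sketch, but the logical architecture is the reverse of the paper's, and your route to (iii) is considerably heavier and slightly under-justified.

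The paper proves (i) and (ii) essentially as you do (using that $F$ is strictly increasing; incidentally, with the definition $F(x)=x+\sum_{x_i<x}\ep_i$ the map $F$ is \emph{left}-continuous, so your remark is the correct one, and your phrase ``the infimum attained'' should really refer to the supremum of $\{t:F(t)\le y\}$, not to the infimum defining $G$, since $F(G(y))<y$ can occur on gaps). The main divergence is in (iii)--(iv): the paper first proves (iii) by simply evaluating both measures on intervals,
\[
D(\phi\circ G)([a,y])=\phi(G(y))-\phi(G(a))=D\phi([a,G(y)])=D\phi(F^{-1}[a,y])=F_\sharp D\phi([a,y]),
\]
using only (ii) and continuity of $\phi$; then (iv) follows by applying (iii) to $\phi(x)=x$ to get $F_\sharp\mathcal L|_{(a,b)}=DG\le \mathcal L|_{(a,b+\ep)\setminus\mathcal N}$, with equality forced by total mass, and the general $f$ by $F_\sharp(f\mu)=(f\circ G)\,F_\sharp\mu$. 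This avoids integration by parts altogether.

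Your approach instead proves (iv) first by the area formula (fine, since $F'=1$ a.e.), then derives (iii) by testing against $g\in C^1_c$ and integrating by parts. This works, but your write-up drops the jump contributions on \emph{both} sides: when you apply (iv) to $-\int \phi(G(y))g'(y)\,dy$ you only recover the integral over $(a,b+\ep)\setminus\mathcal N$, leaving the gap terms $-\sum_i\phi(x_i)\big(g(y_i+\ep_i)-g(y_i)\big)$; and on the other side the ``identity'' $\int g(F)\,dD\phi=-\int\phi\,(g\circ F)'\,dx$ is false as stated because $g\circ F$ has atoms at the $x_i$, producing exactly the same sum. The two omissions cancel, so the conclusion is correct, but as written the argument has a genuine bookkeeping gap. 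The paper's interval-evaluation proof of (iii) sidesteps all of this in one line and is worth adopting.
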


\begin{proof}

Since $F$ is strictly increasing we have 
$$G ( F(x')) = \inf\{ x \in [a,b]\; : \; F(x) \geq F(x')\} =  \{\inf\{ x \in [a,b]\; : \; x \geq x'\} =x'. $$
We can prove (ii) again thanks again to the fact that $F$ is increasing and right continuous we have
$$t \leq G(y) \Leftrightarrow  (F(x) \geq y  \Rightarrow t \leq x ) \Leftrightarrow F(t) \leq y.$$

As for (iii) we have that for every $y \in (a,b+ \ep]$ we have 
$$ D(u \circ  G ) ( [a,y]) = u (G(y)) - u(G(a)) = D u ( [a, G(y)] ) = Du ( F^{-1} [ a,y]) ,$$
where in the last passage we used (ii).
We first prove (iv) for $f \equiv 1$. In order to prove this let us first notice that $G$ is $1$-Lipschitz, thanks to the fact that $|F(x) -F(x')|\geq |x-x'|$, and so $0 \leq DG \leq \mathcal{L}|_{[a, b+ \ep]}$; moreover clearly $G' \equiv 0$ on $(y_i , y_i + \ep_i)$. This, together with (ii) when $u(x)=x$, lead to
$$F_{\sharp} \mathcal{L}|_{[a,b]} = F_{\sharp} Du = D (u \circ G) = DG \leq \mathcal{L}|_{[a, b+ \ep] \setminus \mathcal{N} }.$$
Moreover it is obvious that $\mathcal{L}(\mathcal{N})= \ep$ and so $\mathcal{L}|_{[a, b+ \ep] \setminus \mathcal{N} } ( [a, b+ \ep] ) = b-a = Du [a,b]$. This proves that $F_{\sharp} \mathcal{L}|_{[a,b]} = \mathcal{L}|_{[a, b+ \ep] \setminus \mathcal{N}}$.

Now, in order to prove (iv) we use that since $G$ is a left inverse of $F$ we have $F_{\sharp} (f \mu) = (f \circ G) \cdot  F_{\sharp} \mu$, with $\mu= \mathcal{L}|_{[a,b]} $.
\end{proof}

\begin{lemma}[Jumps to ramps]
Let $\varphi:[a,b] \to [0,1]$ be a monotone Lagrangian trajectory such that for a.e. $x \in [a,b]$ and for $x \in \{a,b\}$ we have $\varphi(\cdot, x) \in H^1([0,1])$ with $\partial_t \varphi = v_t \circ \varphi$ for some velocity field $v_t $ such that $ \iint v_t^2+ v_t'^2  \, dx \, dt < \infty$. Let us suppose that there is an at most countable set $X = \{ x_i \}_{i \in \mathbb{I}}$ such that for every $t$ we have $J(\varphi) \subseteq X$.
Then there exists $\varphi_{\ep}:[a,b+\ep] \to [0,1]$ a monotone Lagrangian trajectory for $\tilde{v}_t$ such that $\varphi_{\ep}(t, \cdot)$ is continuous for every $t$; moreover we have $\varphi_{\ep}(t,F(x))=\varphi(t,x)$ and $D\varphi_{\ep} = F_{\sharp} D \varphi + \mu$ where $\mu \ll \mathcal{L}$ and $F$ as in Definition \ref{def:FG}.
\end{lemma}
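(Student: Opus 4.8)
The plan is to take the space reparametrization $F$ from Definition \ref{def:FG} (with the jump set $X$ of $\varphi$ and a summable choice of gap widths $\ep_i$ with $\sum \ep_i = \ep$) and push the trajectory forward through it, then fill in the newly created gaps $\mathcal{N}=\bigcup_i (y_i,y_i+\ep_i)$ with a minimal-energy interpolation that is driven by the \emph{same} velocity field $v_t$ evaluated at the two one-sided limits $\varphi(t,x_i^-)$ and $\varphi(t,x_i^+)$. First I would set $\varphi_\ep(t,F(x)) \eqdef \varphi(t,x)$ for $x\in[a,b]$; by Lemma \ref{lem:FG}(i)--(ii) this is well defined and monotone on $F([a,b])$, and it leaves undefined exactly the slices $\{y\}\times(y_i,y_i+\ep_i)$. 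On each such gap, at each fixed time $t$, I interpolate linearly (or by any fixed monotone profile) between $\varphi(t,x_i^-)$ and $\varphi(t,x_i^+)$; since by Lemma \ref{ThCountableDiscontinuities} the jump locations $x_i$ are fixed in time, with $\delta_i(t)=\varphi(t,x_i^+)-\varphi(t,x_i^-)\geq 0$ continuous (indeed H\"older) in $t$, this produces a map $\varphi_\ep(t,\cdot)$ that is continuous in $x$ for every $t$, and jointly measurable.

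Next I would check that $\varphi_\ep$ is a Lagrangian trajectory for a velocity field $\tilde v_t$ with finite energy. On the image $F([a,b])\setminus\mathcal{N}$, by construction $\partial_t\varphi_\ep(t,F(x)) = \partial_t\varphi(t,x) = v_t(\varphi(t,x)) = v_t(\varphi_\ep(t,F(x)))$, so $\tilde v_t$ agrees with $v_t$ there. On each gap slice, the endpoint velocities are $v_t(\varphi(t,x_i^\pm))$, and I define $\tilde v_t$ on the interval $[\varphi(t,x_i^-),\varphi(t,x_i^+)]$ to be the minimal $H^1$-norm vector field matching those two boundary values; integrating this flow in time fills the slice consistently with the chosen spatial interpolation (this is exactly the construction in Proposition \ref{ThGeneralFilling}). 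The key energy bound is that the Fisher-Rao / $H^1$ contribution of the filled region is controlled: on $F([a,b])\setminus\mathcal{N}$ the change of variables $F$ is measure-preserving onto its image by Lemma \ref{lem:FG}(iv), so $\iint \tilde v_t^2 + (\partial_x\tilde v_t)^2$ over that set equals the corresponding integral for $v$; on the gaps, the minimal-norm interpolant has energy $\lesssim \delta_i(t)(\,|v_t(\varphi(t,x_i^-))|^2 + |v_t(\varphi(t,x_i^+))|^2\,)\,\mathrm{coth}(\delta_i(t)) + \dots$, which is summable in $i$ by the summability of the $\ep_i$ (hence of the gap sizes at the chosen reference times) and the finiteness of $\iint v_t^2 + v_t'^2$.

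Finally, for the measure identity, Lemma \ref{lem:FG}(iii) gives $F_\sharp D\varphi(t,\cdot) = D(\varphi_\ep(t,\cdot))$ \emph{restricted to the transported part}, i.e. the jump and continuous parts of $\varphi$ coming from $[a,b]$ are carried over verbatim, while the linear fillings on $\mathcal{N}$ contribute an absolutely continuous piece $\mu$ with density bounded by $\delta_i(t)/\ep_i$ on $(y_i,y_i+\ep_i)$; hence $D\varphi_\ep = F_\sharp D\varphi + \mu$ with $\mu\ll\mathcal{L}$, and the relation $\varphi_\ep(t,F(x))=\varphi(t,x)$ holds by definition. The main obstacle I anticipate is the bookkeeping for a discontinuity point $x_i$ that is active only on a countable union of disjoint time intervals rather than all of $[0,1]$: there the gap width $\ep_i$ must be allocated once but the filling velocity $\tilde v_t$ must smoothly vanish (be extended by the constant interpolation) outside those time intervals so that $\varphi_\ep(t,\cdot)$ stays continuous and the global energy stays finite; handling the more general case where $\varphi(t,x_i)$ is not equal to either one-sided limit requires splitting $\ep_i$ into $\ep_i^-+\ep_i^+$ exactly as in the proof of Proposition \ref{ThGeneralFilling}.
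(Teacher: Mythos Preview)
Your overall architecture matches the paper's: push $\varphi$ through $F$, then fill each gap $(y_i,y_i+\ep_i)$ with a Lagrangian trajectory for the minimal $H^1$ interpolant of $v_t$ between the one-sided values, exactly via Proposition~\ref{prop:gfilling}. Two points, however, are genuine gaps rather than cosmetic.

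\textbf{Linear interpolation at each $t$ is inconsistent with the $\sinh$ flow.} In your first paragraph you define $\varphi_\ep$ on the gap by linearly interpolating between $\varphi(t,x_i^\pm)$ \emph{at every time $t$}; in the second paragraph you declare $\tilde v_t$ to be the minimal $H^1$ interpolant (the $\sinh$ field) and say its flow ``fills the slice consistently with the chosen spatial interpolation''. It does not: the flow of the $\sinh$ field does not preserve the linear profile in the gap variable. You must commit to one of the two. The paper commits to the flow: it invokes Proposition~\ref{prop:gfilling}, which fixes a reference time (the midpoint of each time interval where the jump is open), prescribes the linear profile only there, and then \emph{integrates} the $\sinh$ velocity forward and backward. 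The resulting $\varphi_i(t,\cdot)$ is absolutely continuous (Lipschitz for each $t$ strictly inside the interval) but in general not linear, and this is what gives $\mu\ll\mathcal L$. If instead you keep the linear-at-each-$t$ profile, you do get a Lagrangian trajectory, but for the linear velocity, not the minimal one, and your subsequent energy discussion no longer applies.

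\textbf{The energy bound does not follow from the summability of the $\ep_i$.} The $\ep_i$ are arbitrary gap widths in the \emph{domain} variable; the energy $E_{sh}(v_i^-,v_i^+,\varphi_i^-,\varphi_i^+)$ is computed in the \emph{target} interval $[\varphi_i^-(t),\varphi_i^+(t)]$ and has nothing to do with $\ep_i$. (Indeed the $\coth$ factor you wrote blows up as the target gap shrinks, so summability of gap sizes alone would not help either.) The correct, and short, argument --- which the paper states as the second bullet to be verified --- is that the target intervals $[\varphi_i^-(t),\varphi_i^+(t)]$ are pairwise disjoint because $\varphi(t,\cdot)$ is monotone, and on each of them $E_{sh}$ is by definition bounded above by $\int_{\varphi_i^-(t)}^{\varphi_i^+(t)} (v_t^2+v_t'^2)\,dx$, the energy of the original field restricted there. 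Summing over $i$ and integrating in $t$ gives $\sum_i\int_0^1 E_{sh}\,dt\le \iint (v_t^2+v_t'^2)\,dx\,dt<\infty$, which is precisely the hypothesis needed to apply Proposition~\ref{prop:gfilling}.
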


\begin{proof} For every $x_i \in X$ we define $\varphi^{\pm}_i(t) =\lim_{x \to x_i^{\pm}} \varphi(x,t)$.  We want to prove that
\begin{itemize}
\item $\forall i \in I$ we have $\varphi^{\pm}_i \in W^{1,1}$;
\item letting $v^{\pm}_i (t)= \partial_t \varphi_i^{\pm} (t)$ we have
 $$\int_0^1 E_{sh} (v_i^-(t),v_i^+(t),\varphi^+(t),\varphi^-(t)) < \infty.$$
 \end{itemize}

This may let us use Propostition \ref{prop:gfilling} which gives us functions $\varphi_i:[0,1]\times[0,1] \to [0,1]$. These will let us construct our $\varphi_\ep$ in the following way: let us consider $\varphi^c$ the continuous part of $\varphi$. Then let us consider for every $i$ the functions
$$\varphi^{\ep}_i (t,y) = \begin{cases} 0 \qquad & \text{ if }y <y_i \\ \varphi_i (t,\frac{y-y_i}{\ep_i} ) - \varphi_i^- (t) & \text{ if }y  \in [y_i, y_i+\ep_i] \\ \varphi_i^+ (t)- \varphi_i^- (t) & \text{ if }y > y_i. \end{cases}$$ 
We then sum them up to get $\varphi_{\ep}(t,y) = \varphi^c ( t,G(y))  + \sum_{i \in I}  \varphi^{\ep}_i (t,y)$.
\end{proof}

$$ E_{sh} (v^-,v^+,a,b) = \inf \left\{ \int_a^b (v^2+ v'^2)\, dx \; : \; v(a) =v^- \: , \: v(b)=v^+  \right\}\,,$$

$$v (x) = v^+ \frac{\sinh (x-a)}{\sinh(b-a)}+ v^- \frac{\sinh (b-x)}{\sinh(b-a)}\,.$$

\begin{proposition}[Filling]\label{prop:filling}
Let $\varphi_0(t), \varphi_1(t)$ be two curves such that $\partial_t \varphi_i(t) = v_i (t)  $ for some $v_t$ such that $\int_0^1 E_{sh}(v_0(t),v_1(t), \varphi_0(t), \varphi_1(t)) < \infty$. Suppose moreover that $\varphi_0(1)=\varphi_1(1)$ and that $\varphi_0(t) <\varphi_1(t)$ for $0\leq t <1$. Then there exists $\varphi (x,t)$ monotone Lagrangian solution with velocity $v_t$, the solution to the minimization problem $E_{sh}$, such that $\varphi(x,0)=x\varphi_0(0) + (1-x)\varphi_1(0)$,  $\varphi(i,t)=\varphi_i(t)$ and $D_x\varphi(x,t) \ll \mathcal{L}$ for every $t<1$. 
\end{proposition}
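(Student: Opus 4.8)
The plan is to exhibit $\varphi(x,t)$ explicitly through a two-step construction: first reduce to the case where the two reference curves $\varphi_0(t),\varphi_1(t)$ are actually the two boundary trajectories of a Lagrangian flow generated by the minimal $E_{sh}$ velocity field, then integrate that velocity field starting from the prescribed initial profile $\varphi(\cdot,0)$. More precisely, for each fixed $t$ consider the interval $[\varphi_0(t),\varphi_1(t)]$ and the minimizer $v(\cdot)$ of $E_{sh}(v_0(t),v_1(t),\varphi_0(t),\varphi_1(t))$, which by the displayed formula is the explicit $\sinh$-interpolant between the boundary values $v_0(t)$ at $\varphi_0(t)$ and $v_1(t)$ at $\varphi_1(t)$. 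This defines a time-dependent vector field $w_t$ on $\bigcup_t\{t\}\times[\varphi_0(t),\varphi_1(t)]$, and the integrability hypothesis $\int_0^1 E_{sh}<\infty$ is exactly the statement that $w_t\in L^2_t H^1_x$ on this (shrinking) domain. First I would check that $w_t$ is well-defined and Lipschitz in $x$ for a.e. $t$ (so that the flow ODE has classical solutions), using that a bound on $\int E_{sh}$ controls $\|w_t\|_{H^1}$ and hence $\|w_t\|_{C^0}$ and $\|\partial_x w_t\|_{L^2}$ via the same Sobolev-in-one-dimension estimate used in Proposition \ref{EqFlowStability}.

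Next I would define $\varphi(x,t)$ as the Lagrangian flow of $w_t$ with initial condition $\varphi(x,0)=x\varphi_0(0)+(1-x)\varphi_1(0)$, i.e. the solution of $\partial_t\varphi(x,t)=w_t(\varphi(x,t))$, $\varphi(x,0)=x\varphi_1(0)+(1-x)\varphi_0(0)$ (matching the statement's endpoint conventions). Because $w_t$ at the two moving endpoints $\varphi_0(t),\varphi_1(t)$ agrees with $v_0(t),v_1(t)=\partial_t\varphi_0,\partial_t\varphi_1$, the two curves $t\mapsto\varphi_0(t)$ and $t\mapsto\varphi_1(t)$ are themselves integral curves of the flow; hence by uniqueness of the ODE (valid since $w_t$ is spatially Lipschitz for a.e.\ $t$ away from $t=1$) the flow starting at the endpoints $x=0,1$ stays on them, giving $\varphi(0,t)=\varphi_0(t)$, $\varphi(1,t)=\varphi_1(t)$. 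Monotonicity of $x\mapsto\varphi(x,t)$ is preserved by the flow (two distinct integral curves cannot cross), and absolute continuity in $t$ of each $\varphi(x,\cdot)$ is immediate from the flow equation together with $\int_0^1\|w_t\|_{C^0}\,dt<\infty$. The assertion $D_x\varphi(x,t)\ll\mathcal L$ for $t<1$ follows because, on the compact time interval $[0,1-\delta]$, $w_t$ is uniformly spatially Lipschitz, so the flow map $x\mapsto\varphi(x,t)$ is bi-Lipschitz — in particular its $x$-derivative is a bounded function, hence absolutely continuous as a measure; one can also see $\partial_x\varphi(x,t)=\partial_x\varphi(x,0)\exp\!\big(\int_0^t \partial_x w_s(\varphi(x,s))\,ds\big)$, which is finite and positive for $t<1$.

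The main obstacle is the degeneracy as $t\to 1$: since $\varphi_0(1)=\varphi_1(1)$, the interval $[\varphi_0(t),\varphi_1(t)]$ collapses to a point, so the Lipschitz constant of $w_t$ in $x$ (and the $H^1$ norm) may blow up, and uniqueness/regularity of the flow can fail at $t=1$ — which is precisely why the conclusion $D_x\varphi\ll\mathcal L$ is only claimed for $t<1$. I would handle this by working on $[0,1-\delta]$ for each $\delta>0$, obtaining all the above properties there, and then taking $\delta\to0$: the flow is defined on $[0,1)$, and continuity up to $t=1$ with the correct endpoint value $\varphi(x,1)$ follows from the uniform $t$-Hölder (in fact absolute-continuity) estimate $|\varphi(x,t)-\varphi(x,s)|\le\int_s^t\|w_r\|_{C^0}dr$ with $\int_0^1\|w_r\|_{C^0}^2dr<\infty$, exactly as in Proposition \ref{EqFlowStability}. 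A secondary point to verify is that the velocity field realizing $\varphi$ is indeed $w_t$ (the $E_{sh}$-minimizer) and not merely some velocity field — but this holds by construction since we integrated $w_t$ itself; the energy identity $\int_0^1\int(\,|w_t|^2+|\partial_x w_t|^2)\,dx\,dt=\int_0^1 E_{sh}(\dots)\,dt$ is then automatic from the definition of $E_{sh}$ as the pointwise-in-$t$ infimum.
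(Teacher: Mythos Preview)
Your approach is essentially the same as the paper's: define the flow of the explicit $\sinh$-interpolant $v_t$, use spatial Lipschitz control of $v_t$ on each $[0,1-\delta]$ to get well-posedness and bi-Lipschitz (hence absolutely continuous) flow maps, and pass to $t=1$ by the Hölder-in-time estimate. One small imprecision: $w_t$ is not \emph{uniformly} Lipschitz on $[0,1-\delta]$ but only has $L^1_t$-integrable Lipschitz constant (the paper bounds $\on{Lip}_x(v_t)\le (|v_0(t)|+|v_1(t)|)/\tanh(\varphi_1(t)-\varphi_0(t))$, and the numerator is merely integrable), which is still enough for Carath\'eodory ODE theory and for the Gronwall/exponential formula you invoke.
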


\begin{proof} Let $\Omega= \{ (x,t) \; : \;  0 \leq t <1, \; \varphi_0(t) < x < \varphi_1 (t) \}$. In this set we have that $v_t$ is locally smooth in $x$; moreover we have
 $$Lip_x (v_t) \leq (|v_0(t)|+|v_1(t)|)/\tanh(\varphi_0(t)-\varphi_1(t)),$$
which implies that $\int_0^s Lip_x (v_t) \, d t < \infty$ for all $0 \leq s < 1$. This already implies that the Cauchy problem is well posed in $\Omega$ and in particular, fixing $\varphi_0(0) < f(0) < \varphi_1 (0)$, a unique solution to $\partial_t f(t) = v_t (f(t))$ exists up until it hits the boundary of $\Omega$. Suppose that $(s, f(s))$ is on the boundary of $\Omega$: we want to prove that $s=1$ (and in particular also $f(s)=\varphi_1(1)=\varphi_0(1)$). In fact if $s <1$ without loss of generality we can assume $f (s)=\varphi_1(s)$; but since $\varphi_1$ satisfies the same Cauchy problem we have $ \partial_t (\varphi_1(t) - f(t)) \geq - Lip_x (v_t) (\varphi_1(t) - f(t) )  $ and so by Gronwall
$$|\varphi_1(s)- f(s)| \geq | \varphi_1(0) -f(0)| e^{- \int_0^s Lip_x (v_t) \, d t },$$
which is a contradiction since we supposed $f(0) \neq \varphi_1(0)$.

As for the second point we have that for $y<y'$, $\varphi (y', 0) - \varphi(y,0)= (y'-y) \cdot (\varphi_1(0) - \varphi_0(0))$; a similar Gronwall argument as before shows that for every $s<1$ we have
$$ \varphi (y',s) - \varphi(y,s) \leq (y'-y) \cdot (\varphi_1(0) -\varphi_0(0))e^{\int_0^s Lip_x (v_t) \, d t }.$$
In particular we have that $\varphi(x,s)$ is Lipschitz in $x$ for every $s<1$. 
\end{proof}

\begin{proposition}[General Filling]\label{prop:gfilling}
Let $\varphi_0(t), \varphi_1(t)$ be two curves such that $\partial_t \varphi_i(t) = v_i (t)  $ for some integralble $v_i(t)$ such that $\int_0^1 E_{sh}(v_0(t),v_1(t), \varphi_0(t), \varphi_1(t)) < \infty$. Suppose moreover that $\varphi_0(t) \leq \varphi_1(t)$ for every $t \in [0,1]$. Then there exists $\varphi (x,t)$ monotone Lagrangian solution with velocity $v_t$, the solution to the minimization problem $E_{sh}$, such that  $\varphi(i,t)=\varphi_i(t)$ and $D_x\varphi(x,t) \ll \mathcal{L}$ for every $t \in [0,1]$. 
\end{proposition}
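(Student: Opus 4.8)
The plan is to reduce everything to the single–touching–point situation handled in Proposition~\ref{prop:filling} by cutting $[0,1]$ along the set where the two curves meet. Put $E \eqdef \{t\in[0,1] : \varphi_0(t)=\varphi_1(t)\}$, which is closed since $\varphi_0,\varphi_1$ are continuous, and on $E$ simply declare $\varphi(x,t)\eqdef\varphi_0(t)=\varphi_1(t)$ for every $x\in[0,1]$. Since $g\eqdef \varphi_1-\varphi_0\ge 0$ vanishes on $E$, one has $g'=0$ a.e.\ on $E$, hence $v_0(t)=v_1(t)$ for a.e.\ $t\in E$; in particular $\partial_t\varphi(x,t)=v_0(t)$ is exactly the value of the (degenerate) $E_{sh}$-minimizer at the single point $\varphi_0(t)$, so the Lagrangian relation $\partial_t\varphi=v_t\circ\varphi$ holds a.e.\ on $E$.

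The complement $[0,1]\setminus E$ is relatively open, hence a countable disjoint union of intervals $J_k$, each of one of the forms $(a_k,b_k)$, $[0,b_k)$, $(a_k,1]$ or $[0,1]$, with $\varphi_0<\varphi_1$ on the interior of $J_k$ and $\varphi_1-\varphi_0\to 0$ at any endpoint lying in $(0,1)$, equivalently at any endpoint that belongs to $E$. On each $J_k$ I would build $\varphi$ on $[0,1]\times\overline{J_k}$ as follows. If $J_k=[0,1]$ (so $E=\emptyset$), the gap is bounded below on the compact $[0,1]$, the sinh velocity field $v_t$ on $[\varphi_0(t),\varphi_1(t)]$ is Lipschitz in $x$ with $\int_0^1 Lip_x(v_t)\,dt<\infty$, and one lets $\varphi(x,\cdot)$ solve $\partial_t f=v_t(f)$ from the straight-line datum $x\mapsto(1-x)\varphi_0(0)+x\varphi_1(0)$ at $t=0$; the Gronwall estimates already written in the proof of Proposition~\ref{prop:filling} keep the trajectories inside $[\varphi_0(t),\varphi_1(t)]$, strictly ordered in $x$, and Lipschitz in $x$. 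If $J_k$ has exactly one endpoint in $E$, an affine change of the time variable sending that endpoint to $1$ — composed with a time reversal if the endpoint was the left one — places us verbatim in the hypotheses of Proposition~\ref{prop:filling}; here one only has to note that $\int E_{sh}<\infty$ is preserved, because $E_{sh}$ is $2$-homogeneous in the velocities while $dt$ picks up the reciprocal factor. Finally, if both endpoints of $J_k=(a_k,b_k)$ lie in $E$, choose $c_k\in(a_k,b_k)$ and apply the previous construction on $[c_k,b_k)$ and, after time reversal, on $(a_k,c_k]$, both carrying the same straight-line value $x\mapsto(1-x)\varphi_0(c_k)+x\varphi_1(c_k)$ at $t=c_k$, and concatenate the two trajectories there. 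In every case one gets $\varphi(0,t)=\varphi_0(t)$ and $\varphi(1,t)=\varphi_1(t)$ on $J_k$.

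It then remains to check that these pieces, together with the collapsed definition on $E$, assemble into the asserted global map (note that $\bigcup_k\overline{J_k}\cup E=[0,1]$ and that on $\partial J_k\subset E$ the two descriptions agree). Continuity of $\varphi$ across an endpoint $t_0\in E$ of a component is automatic: by construction $\varphi_0(t)\le\varphi(x,t)\le\varphi_1(t)$ and $\varphi_1(t)-\varphi_0(t)\to 0$, so $\varphi(x,t)\to\varphi_0(t_0)$ uniformly in $x$, which matches the value prescribed on $E$. For fixed $x$, the curve $t\mapsto\varphi(x,t)$ is absolutely continuous: it solves the ODE on each $J_k$ (glued continuously at any internal cutting point $c_k$), is constant on $E$, is continuous at $\partial J_k$, and its velocity is integrable because the sinh formula gives $|v_t(y)|\le\max(|v_0(t)|,|v_1(t)|)$ for $y\in[\varphi_0(t),\varphi_1(t)]$ — using $\sinh u+\sinh w\le\sinh(u+w)$ for $u,w\ge 0$ — while $v_0,v_1$ are integrable by hypothesis. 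Monotonicity in $x$ is strict on every time-slice inside a $J_k$ (non-crossing of trajectories, i.e.\ the Gronwall lower bound of Proposition~\ref{prop:filling}) and trivial on the collapsed slices, so it holds on all of $[0,1]$. Lastly $D_x\varphi(\cdot,t)\ll\mathcal L$ for $t$ in a $J_k$ by Proposition~\ref{prop:filling} (or the Lipschitz bound when $E=\emptyset$), and for $t\in E$ because $\varphi(\cdot,t)$ is then constant.

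I expect the only real difficulty to be structural rather than analytic: the touching set $E$ may have positive measure or be topologically intricate (a fat Cantor set, say) and the components $J_k$ may accumulate, so the delicate point is to verify that the independently built pieces glue into one map that is absolutely continuous in $t$ for \emph{every} $x$ and monotone in $x$ for \emph{every} $t$, and that $\partial_t\varphi=v_t\circ\varphi$ really does hold a.e.\ on $E$ — which is exactly the use of ``$g'=0$ a.e.\ on $\{g=0\}$''. By contrast, the two a priori analytic obstructions, namely continuity at the seams and integrability of the velocity, come for free, from the uniform squeeze by $\varphi_0,\varphi_1$ and from the elementary inequality $\sinh u+\sinh w\le\sinh(u+w)$ respectively.
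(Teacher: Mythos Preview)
Your approach is essentially the same as the paper's: decompose the time interval into the closed contact set $E=\{\varphi_0=\varphi_1\}$ on which $\varphi(x,t)\equiv\varphi_0(t)$, write the complement as a countable union of open intervals, and on each such interval pick an interior reference time and apply Proposition~\ref{prop:filling} to the two halves (with a time reversal on one side). The paper simply takes the midpoint $c_k=\tfrac{a_k+b_k}{2}$ and does not spell out the boundary cases $0\notin E$ or $1\notin E$, nor the gluing verifications (continuity at $\partial J_k$, absolute continuity in $t$, the velocity bound via $\sinh u+\sinh w\le\sinh(u+w)$, and $v_0=v_1$ a.e.\ on $E$); your write-up supplies these details but does not change the strategy.
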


\begin{proof} The curves $\varphi_0(t), \varphi_1(t)$ are continuous and so the set $\{ t \in [0,1] \; \varphi_0(t) < \varphi_1(t) \} =\Omega$ is open in $[0,1]$. In particular it is a countable union of disjoint intervals $I_i =(a_i,b_i)$. On every half interval $[\frac{a_i+b_i}2, b_i)$ and $(a_i, \frac{a_i+b_i}2]$ we can apply Proposition \ref{prop:filling} in order to define $\varphi$ on $[a_i,b_i] \times [0,1]$ (notice that the construction in $\frac{a_i+b_i}2$ is the same from both sides). Then it is sufficient to define $\varphi(x,t)=\varphi_0(t)$ on $[0,1] \setminus \Omega$.

\end{proof}

%

\bibliographystyle{plain}      
\bibliography{articles,references,SecOrdLandBig,MesPapiers}   

\end{document}